\newtheorem{theorem}{Theorem}[section]
\newtheorem{lemma}[theorem]{Lemma}
\newtheorem{proposition}[theorem]{Proposition}
\theoremstyle{definition}
\newtheorem{definition}[theorem]{Definition}
\newtheorem{remark}[theorem]{Remark}
\newtheorem{assumption}[theorem]{Assumption}
\title[Running head]
{Smoothing Accelerated Proximal Gradient Method with Fast Convergence Rate  for Nonsmooth Multi-objective Optimization} 
\author[First-name1 Last-name1 and First-name2 Last-name2]{Huang Chengzhi}
\subjclass{Primary: 49J52, 65K05; Secondary: 90C25, 90C29.}
\keywords{Nonsmooth multiobjective optimization, Smoothing method, Accelerated algorithm with extrapolation, Convergence rate, Sequential convergence.}
\thanks{The first author is supported by [insert grant information here]}
\thanks{$^*$Corresponding author: First-name1 Last-name1}
\begin{document}
\maketitle

\centerline{\scshape
Huang Chengzhi$^{{\href{mailto:2022110518015@cqnu.com}{\textrm{\Letter}}}*1}$
and First-name2 Last-name2$^{{\href{mailto:author2@domain.com}{\textrm{\Letter}}}1,2}$}

\medskip

{\footnotesize
 \centerline{$^1$Chong qing normal university, China}
} 

\medskip

{\footnotesize
 \centerline{$^2$Affiliation, Country}
}

\bigskip

 \centerline{(Communicated by Handling Editor)}


\begin{abstract}
This paper proposes a Smoothing Accelerated Proximal Gradient Method with Extrapolation Term (SAPGM) for nonsmooth multiobjective optimization. By combining the smoothing methods and the accelerated algorithm for multiobjective optimization by Tanabe et al., our method achieve fast convergence rate. Specifically, we establish that the convergence rate of our proposed method can be enhanced to \(o(\ln^\sigma k/k)\) by incorporating a  extrapolation term \(\frac{k-1}{k + \alpha -1}\) with \(\alpha > 3\).Moreover, we prove that the iterates sequence is convergent to a Pareto optimal solution of the primal problem. Furthermore, we present an effective strategy for solving the subproblem through its dual representation, validating the efficacy of the proposed method through a series of numerical experiments.
\end{abstract}


\section{Introduction}
Multiobjective optimization involves the simultaneous minimization (or maximization) of multiple objective functions while considering relevant constraints. The concept of Pareto optimality becomes crucial, as finding a single point that minimizes all objective functions concurrently is challenging. A point is deemed Pareto optimal if there exists no other point with the same or smaller objective function values and at least one strictly smaller objective function value. Applications of multiobjective optimization are pervasive, spanning economics\cite{chinchuluun2007survey}, engineering\cite{hakanen2021multiobjective}, mechanics\cite{ren2021effects}, statistics\cite{rosinger1981interactive}, internet routing\cite{doolittle2018robust}, and location problems\cite{belgasmi2008evolutionary}.

This paper focuses predominantly on composite nonsmooth multiobjective optimization, expressed as:
\begin{equation}\label{1}
	\min_{x \in \mathbb{R}^{n}} F(x)
\end{equation}
with $F : \mathbb{R}^n \to (\mathbb{R} \cup \{ \infty\})^{m}$ and $F := (F_1,\dotsb,F_{m})^{T}$ taking the form
\begin{equation}\label{2}
	F_{i}(x) := f_{i}(x) + g_{i}(x), i = 1,2,\dotsb,m,
\end{equation}
where, $f_{i}:\mathbb{R}^{n} \to \mathbb{R}$ represents a convex but nonsmooth function, and
$g_{i}:\mathbb{R}^{n} \to \mathbb{R}$ is a closed, proper, and convex function,which may not be nonsmooth.

The composite optimization problem is a significant class of optimization problems, not only because it encompasses various practical challenges—such as minimax problems \cite{xian2021minimax} and penalty methods for constrained optimization \cite{xia2020penalty}—but also due to its wide range of applications. For instance, as discussed in \cite{tanabe2019proximal}, the separable structure in (\ref{1}) can be used to model robust multi-objective optimization problems, which involve uncertain parameters and optimize for the worst-case scenario. Additionally, this structure is applicable in machine learning\cite{jin2006multi}, particularly for solving multi-objective clustering problems.

Naturally, we are interested in methods for solving multi-objective optimization problems.
Common methods include scalarization method, evolutionary method and gradient method.

Scalarization is a fundamental approach to solve multiobjective optimization problems, transforming them into single-objective ones. Various procedures, such as optimizing one objective while treating others as constraints\cite{nikulin2012new}, or aggregating all objectives\cite{rocca2022sensitivity}, are commonly applied. Evolution algorithms\cite{wang2023dynamic} provide another avenue, but proving their convergence rate poses challenges. Consequently, traditional methods for solving the problem directly are also employed.

In response to limitations, descent methods for multiobjective optimization problems have gained significant attention. These algorithms, which reduce all objective functions at each iteration, offer advantages such as not requiring prior parameter selection and providing convergence guarantees under reasonable assumptions. Noteworthy methods include the steepest descent\cite{fliege2000steepest}, projected gradient\cite{fukuda2013inexact}, proximal point\cite{bonnel2005proximal}, Newton\cite{fliege2009newton}, trust region\cite{carrizo2016trust}, and conjugate gradient methods\cite{lucambio2018nonlinear} for solving
$g(x) = 0$. Among these, first-order methods, utilizing only the first-order derivatives of the objective functions, are distinguished, such as the steepest descent, projected gradient, and proximal gradient methods. The latter method converges to Pareto solutions with a rate of O(1/k).

To enhance the convergence efficiency of the proximal gradient method, numerous scholars have endeavored to introduce acceleration techniques into single-objective first-order methodologies. Detailed works can be seen in the following literature:\cite{nesterov1983unconstrained}\cite{beck2009fast}\cite{chambolle2015convergence}\cite{attouch2016rate}.

The application of acceleration algorithms in single objective scenarios prompted a significant surge in interest in exploring their efficacy in the realm of multi-objective optimization problems. A recent noteworthy development by Tanabe et al. \cite{tanabe2023accelerated} involves the extension of the highly regarded Fast Iterative Shrinkage-Thresholding Algorithm (FISTA) to the multi-objective context. The ensuing convergence rate, denoted as \(O(1/k^2)\) and characterized by a merit function \cite{tanabe2023new}, represents a substantial improvement over the proximal gradient method for Multi-Objective Problems (MOP) \cite{tanabe2019proximal}. Moreover, Nishimura et al. \cite{nishimura2022monotonicity} have established a monotonicity version of the multiobjective FISTA, adding to the methodological advancements in this domain. Furthermore, Tanabe et al. \cite{tanabe2022globally} have expanded the applicability of the multiobjective FISTA by introducing hyperparameters, offering a generalization applicable even in single-objective scenarios. Importantly, this extended framework preserves the commendable convergence rate of \(O(1/k^2)\) observed in the multiobjective FISTA. Additionally, it is proved that the iterative sequences is convergent.Inspired by the impact of the extrapolation parameters in single-objective case\cite{attouch2016rate}, we introduce the extrapolation parameter $\frac{k-1}{k+\alpha-1}$ with $\alpha > 3$  into the multiobjective proximal gradient algorithm.

After solving the problem of algorithm acceleration, another problem follows: how to deal with non-smooth multi-objective optimization efficiently?

For non-smooth multi-objective optimization problems, current research mainly includes Mäkelä et al.'s proximal bundle method \cite{makela2003multiobjective,montonen2018multiple,haarala2004newlimited,makela1992nonsmooth} and the subgradient method. Gebken et al. \cite{gebken2021efficient} proposed a subgradient descent algorithm for solving non-smooth multi-objective optimization problems by combining the descent direction from \cite{mahdavi2012effective} with the approximation based on Goldstein's $\epsilon$-subdifferential from \cite{goldstein1977optimization}. Besides, Konstantin Sonntag et al. \cite{kumari2015subgradient} proposed a new subgradient method for solving non-smooth vector optimization problems, which includes regularization and interior point variants of Newton's method. However, all of them face the challenge of requiring complex calculations and numerous calls to subgradients, resulting in a significant increase in computation time. Fortunately, Chen \cite{chen2012smoothing} proposed a smoothing construction, which used a sequence of functions to approximate the objective functions of the primal problem. This construction can avoid calculating the subgradient and directly use the gradient of the smoothing function to obtain the result. Inspired by this idea, we decided to construct an algorithm with fast speed to solve non-smooth multi-objective optimization problems under the smoothing framework, combined with the previously mentioned accelerated proximal gradient method with extrinsic terms.

Moreover, with practical computational efficiency in mind, we derive a convex and differentiable dual of the subproblem, simplifying its solution, particularly when the number of objective functions is fewer than the decision variable dimension. The entire algorithm is implemented using this dual problem, and its effectiveness is confirmed through numerical experiments.

The structure of this paper unfolds as follows: Section 2 introduces notations and concepts, Section 3 presents the smoothing accelerated proximal gradient method with extrapolation for nonsmooth multiobjective optimization, and Section 4 analyzes its
$o(\ln^\sigma k/k)$ convergence rate. Section 5 outlines an efficient method to solve the subproblem through its dual form, and Section 6 reports numerical results for test problems.

\section{Preliminary Results}

In this paper, for any natural number \(n\), the symbol \(\mathbb{R}^{n}\) denotes the \(n\)-dimensional Euclidean space. The notation \(\mathbb{R}^{n}_{+} \subseteq \mathbb{R}^{n}\) is employed to signify the non-negative orthant of \(\mathbb{R}^{n}\), denoted as \(\mathbb{R}^{n}_{+} := \{ v \in \mathbb{R}^{n} | v_{i} \geq 0, i=1,2,\dotsb,n \}\). Additionally, \(\Delta^{n}\) represents the standard simplex in \(\mathbb{R}^{n}\) and is defined as
\[
\Delta^{n} := \{ \lambda \in \mathbb{R}^{n}_{+} | \lambda_{i} \geq 0, \sum_{i=1}^{n} \lambda_{i} = 1 \}.
\]
Subsequently, the partial orders induced by \(\mathbb{R}^{n}_{+}\) are considered, where for any \(v^{1}, v^{2} \in \mathbb{R}^{n}\), \(v^{1} \leq v^{2}\) (alternatively, \(v^{1} \geq v^{2}\)) holds if \(v^{2} - v^{1} \in \mathbb{R}^{n}_{+}\), and \(v^{1} < v^{2}\) (alternatively, \(v^{1} > v^{2}\)) if \(v^{2} - v^{1} \in \text{int} \, \mathbb{R}^{n}_{+}\). Moreover, let \(\left\langle \cdot, \cdot \right\rangle\) denote the Euclidean inner product in \(\mathbb{R}^{n}\), specifically defined as \(\left\langle u, v \right\rangle := \sum_{i=1}^{n} u_{i}v_{i}\). The Euclidean norm \(\left\| \cdot \right\|\) is introduced as \(\left\| u \right\| := \sqrt{\left\langle u, u \right\rangle}\). Furthermore, the \(\ell_1\)-norm and the \(\ell_{\infty}\)-norm are defined by \(\left\| u \right\|_1 := \sum_{i=1}^{n} |u_{i}|\) and \(\left\| u \right\|_{\infty} := \max_{i=1,\dotsb,n} |u_{i}|\), respectively.

Because the construction of proximal gradient algorithm,we should introduce some basic definitions for following discussion.For a closed , proper and convex function $h:\mathbb{R}^{n} \to \mathbb{R} \cup \{ \infty \}$,the Moreau envelope of $h$ defined by
$$\mathcal{M}_{h}(x) := \min_{y \in R^{n}} \{h(y) + \frac{1}{2} \left\|x-y\right\|^2\}.$$
The unique solution of the above problem is called the proximal operator of $h$ and write it as
$$ prox_{h}(x) := \arg\min_{y \in R^{n}}\{h(y) + \frac{1}{2} \left\|x-y\right\|^2\}.$$

Next,we introduce a property between Moreau envelope and proximal operation by following lemma.
\begin{lemma}[\cite{rockafellar1997convex}]
	If  {$h$} is a proper closed and convex function, the Moreau envelope
	$\mathcal{M}_{h}$ is lipschitz continuous and takes the following form,
	$$\nabla \mathcal{M}_{h}(x) := x- prox_{h}(x).$$
\end{lemma}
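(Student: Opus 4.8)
The plan is to treat this as the classical Moreau--Yosida regularization result and to organize the proof around the unique minimizer defining the proximal operator. First I would observe that for fixed $x$ the map $y \mapsto h(y) + \tfrac{1}{2}\|x-y\|^2$ is proper, closed, and $1$-strongly convex, so it attains its infimum at a single point, which is exactly $p(x) := \mathrm{prox}_h(x)$. Consequently $\mathcal{M}_h$ is finite on all of $\mathbb{R}^n$ and $\mathcal{M}_h(x) = h(p(x)) + \tfrac12\|x-p(x)\|^2$. Writing the first-order optimality condition for this inner minimization gives the inclusion $x - p(x) \in \partial h(p(x))$, which is the relation I will exploit throughout.

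The core step is to show that $v := x - p(x)$ is a subgradient of $\mathcal{M}_h$ at $x$. For an arbitrary point $z$, set $q := p(z)$, so that $\mathcal{M}_h(z) = h(q) + \tfrac12\|z-q\|^2$. Applying the subgradient inequality $h(q) \ge h(p(x)) + \langle x - p(x),\, q - p(x)\rangle$ coming from $x - p(x) \in \partial h(p(x))$, and substituting into the expression for $\mathcal{M}_h(z)$, the target inequality $\mathcal{M}_h(z) \ge \mathcal{M}_h(x) + \langle v,\, z - x\rangle$ reduces, after collecting the quadratic terms and completing the square, to $\tfrac12\big\|(x - p(x)) - (z - p(z))\big\|^2 \ge 0$, which holds trivially. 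Hence $x - p(x) \in \partial \mathcal{M}_h(x)$ for every $x$.

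To upgrade this from a subgradient to \emph{the} gradient, I would use that $\mathcal{M}_h$ is convex and finite-valued on $\mathbb{R}^n$ and that $x \mapsto x - p(x)$ is a single-valued continuous selection of $\partial \mathcal{M}_h$; a finite convex function that admits a continuous selection of its subdifferential is continuously differentiable with gradient equal to that selection, yielding $\nabla \mathcal{M}_h(x) = x - \mathrm{prox}_h(x)$. The continuity of $p = \mathrm{prox}_h$ I would derive from its firm nonexpansiveness, $\|p(x)-p(x')\|^2 \le \langle p(x)-p(x'),\, x - x'\rangle$, itself a consequence of the monotonicity of $\partial h$ combined with the optimality inclusions at $x$ and $x'$. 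Finally, for the Lipschitz claim I would apply the same firm nonexpansiveness to $I - p$, which shows that $\nabla \mathcal{M}_h = I - \mathrm{prox}_h$ is firmly nonexpansive and therefore $1$-Lipschitz; this is the precise sense in which $\mathcal{M}_h$ is smooth with Lipschitz-continuous gradient.

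The main obstacle is the differentiability upgrade rather than the gradient formula itself: the subgradient computation is a one-line completion of squares, but justifying that $\partial \mathcal{M}_h(x)$ is the singleton $\{x - p(x)\}$ — equivalently that $\mathcal{M}_h$ is genuinely differentiable and not merely subdifferentiable — rests on the continuity of $\mathrm{prox}_h$ together with the convex-analysis principle linking continuous subgradient selections to differentiability. Establishing the firm nonexpansiveness of $\mathrm{prox}_h$ from the two monotone optimality inclusions is where the real work concentrates, and it simultaneously delivers the Lipschitz estimate.
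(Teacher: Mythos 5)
The paper offers no proof of this lemma at all---it is quoted verbatim from Rockafellar's \emph{Convex Analysis}---so there is nothing to compare your argument against except the classical one, which is exactly what you have reconstructed. Your outline is correct: the completion-of-squares computation does reduce the subgradient inequality to $\tfrac12\|(x-p(x))-(z-p(z))\|^2\ge 0$, and firm nonexpansiveness of $p=\mathrm{prox}_h$ follows as you say from monotonicity of $\partial h$ applied to the inclusions $x-p(x)\in\partial h(p(x))$ and $x'-p(x')\in\partial h(p(x'))$. Two remarks. First, the ``continuous selection of $\partial\mathcal{M}_h$ implies differentiability'' principle you invoke is true but needs a citation (it rests on the representation of $\partial f$ as the convex hull of limits of gradients at nearby points of differentiability); you can bypass it entirely, since once $v(x):=x-p(x)$ is known to be a $1$-Lipschitz selection of $\partial\mathcal{M}_h$, adding the two subgradient inequalities at $x$ and $z$ gives $0\le \mathcal{M}_h(z)-\mathcal{M}_h(x)-\langle v(x),z-x\rangle\le\langle v(z)-v(x),z-x\rangle\le\|z-x\|^2$, which is an $O(\|z-x\|^2)$ remainder and yields Fr\'echet differentiability with $\nabla\mathcal{M}_h=v$ directly. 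Second, the lemma as printed is slightly misstated: it is $\nabla\mathcal{M}_h$, not $\mathcal{M}_h$ itself, that is globally ($1$-)Lipschitz (take $h$ the indicator of a point, so that $\mathcal{M}_h$ is a quadratic); your proof establishes the correct version.
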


As explicated in the Introduction section, the principal challenge in addressing the optimization problem denoted as (\ref{1}) through the Proximal Gradient (PG) and Accelerated Proximal Gradient (APG) methods comes from the nonsmooth nature of the objective function \(f\). Specifically, when \(f\) is nonsmooth or its gradient \(\nabla f\) lacks global Lipschitz continuity, a straightforward approach involves resorting to the smoothing method, a pivotal aspect in our analytical framework. In the context of this study, we introduce an algorithm utilizing the smoothing function delineated in \cite{chen2012smoothing}. This smoothing function serves the purpose of approximating the nonsmooth convex function \(f\) by a set of smooth convex functions, thereby facilitating the application of gradient-based optimization techniques.
\begin{definition}[\cite{chen2012smoothing}]\label{def1}
	For convex function $f$ in (\ref{2}), we call $\tilde{f}:\mathbb{R}^{n} \times \mathbb{R}_{+} \to \mathbb{R}$ a smoothing
	function of $f$, if $\tilde{f}$ satisfies the following conditions:
	
	(i) for any fixed $\mu > 0$,$\tilde{f}( \cdot, \mu)$ is continuously differentiable on $\mathbb{R}^{n}$;
	
	(ii) $\lim_{z \to x,\mu \downarrow 0}\tilde{f}(z,\mu) = f(x), \forall x \in \mathbb{R}^{n}$;
	
	(iii) (gradient consistence) $\{\lim_{z \to x,\mu \downarrow 0}\tilde{c}(z,\mu)\} \subseteq \partial f(x), \forall x \in \mathbb{R}^{n}$ ;
	
	(iv) for any fixed $\mu >0$, $\tilde{f}(z,\mu)$ is convex on $\mathbb{R}^{n}$;
	
	(v) there exists a $k > 0$ such that
	$$|\tilde{f}(x,\mu_2) - \tilde{f}(x,\mu_1)| \leq k|\mu_1-\mu_2|, \forall x \in \mathbb{R}^{n}, \mu_1,\mu_2 \in \mathbb{R}_{++};$$
	
	(vi) there exists an $L > 0$ such that $\nabla_{x}\tilde{f}(\cdot,\mu)$ is Lipschitz continuous on $\mathbb{R}^{n}$ with
	factor $L\mu^{-1}$ for any fixed $\mu \in \mathbb{R}_{++}$.
\end{definition}

Combining properties (ii) and (v) in Definition (\ref{def1}), we have

$$|\tilde{f}(x,\mu) - f(x)| \leq k \mu,\forall x \in \mathbb{R}^{n},\mu \in \mathbb{R}_{++}.$$

The exploration of smooth approximations for diverse specialized nonsmooth functions has a venerable lineage, yielding a wealth of theoretical insights \cite{chen2012smoothing}, \cite{facchinei2003finite}, \cite{nesterov2005smooth}, \cite{rockafellar1998variational}, \cite{hiriart1996convex}. The foundational conditions (i)–(iii) articulated herein are integral elements in the characterization of a smoothing function, as delineated in \cite{chen2012smoothing}. These conditions are imperative for ensuring the efficacy of smoothing methods when applied to the resolution of corresponding nonsmooth problems. Condition (iv) stipulates that the smoothing function \(\tilde{f}(\cdot, \mu)\) preserves the convexity of \(f\) for any fixed \(\mu \in \mathbb{R}_{++}\). Conditions (v) and (vi) serve to guarantee the global Lipschitz continuity of \(\tilde{f}(x, \cdot)\) for any fixed \(x \in \mathbb{R}^{n}\) and the global Lipschitz continuity of \(\nabla_{x}\tilde{f}(\cdot, \mu)\) for any fixed \(\mu \in \mathbb{R}_{++}\), respectively. These conditions collectively establish a foundation for the utility and effectiveness of the smoothing function in the context of nonsmooth optimization problems.

We now revisit the optimality criteria for the multiobjective optimization problem denoted as (\ref{1}). An element \(x^{*} \in \mathbb{R}^{n}\) is deemed weakly Pareto optimal if there does not exist \(x \in \mathbb{R}^{n}\) such that \(F(x) < F(x^{*})\), where \(F: \mathbb{R}^{n} \to \mathbb{R}^{m}\) represents the vector-valued objective function. The ensemble of weakly Pareto optimal solutions is denoted as \(X^{*}\). The merit function \(u_{0}: \mathbb{R}^{n} \to \mathbb{R} \cup \{\infty\}\), as introduced in \cite{tanabe2023new}, is expressed in the following manner:
\begin{equation}\label{3}
	u_0(x) := \sup_{z \in \mathbb{R}^{n}} \min_{i =1,\dotsb,m}[F_{i}(x) - F_{i}(z)].
\end{equation}
The following lemma proves that $u_0$ is a merit function in the Pareto sense.
\begin{lemma}[\cite{tanabe2023accelerated}]
	Let $u_0$ be given as (\ref{3}), then $u_0(x) \geq 0, x \in \mathbb{R}^{n}$, and $x$ is the
	weakly Pareto optimal for (\ref{1}) if and only if $u_0(x) = 0$.
\end{lemma}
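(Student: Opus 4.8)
The plan is to prove nonnegativity first and then obtain the characterization by directly unpacking the definition of weak Pareto optimality, since both parts reduce to the structure of the inner quantity $\min_{i}[F_i(x) - F_i(z)]$.

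First I would establish $u_0(x) \geq 0$ for every $x \in \mathbb{R}^n$. The key observation is that $z = x$ is admissible in the supremum defining $u_0(x)$; substituting it gives $\min_{i=1,\dots,m}[F_i(x) - F_i(x)] = 0$, so the supremum over all $z$ is at least $0$.

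Next, for the equivalence, I would rewrite weak Pareto optimality in a form matching the structure of $u_0$. By definition, $x$ fails to be weakly Pareto optimal precisely when there exists $z \in \mathbb{R}^n$ with $F(z) < F(x)$, i.e. $F_i(z) < F_i(x)$ for every $i$, equivalently $F_i(x) - F_i(z) > 0$ for all $i$, which is exactly $\min_{i}[F_i(x) - F_i(z)] > 0$. Hence $x$ is weakly Pareto optimal if and only if $\min_{i}[F_i(x) - F_i(z)] \leq 0$ for every $z \in \mathbb{R}^n$. With this reformulation the two implications are immediate. For the forward direction, weak Pareto optimality gives $\min_{i}[F_i(x) - F_i(z)] \leq 0$ for all $z$, so taking the supremum over $z$ yields $u_0(x) \leq 0$; combined with $u_0(x) \geq 0$ this forces $u_0(x) = 0$. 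For the converse, if $u_0(x) = 0$ but $x$ were not weakly Pareto optimal, the reformulation produces a $z$ with $\min_{i}[F_i(x) - F_i(z)] > 0$, whence $u_0(x) \geq \min_{i}[F_i(x) - F_i(z)] > 0$, contradicting $u_0(x) = 0$.

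I do not anticipate a serious obstacle, as the argument is essentially a careful translation of definitions. The one point requiring attention is the logical negation: passing from ``there is no $z$ with $F(z) < F(x)$'' to ``for every $z$ one has $\min_{i}[F_i(x) - F_i(z)] \leq 0$'', making sure the strict componentwise inequalities defining the order $<$ on $\mathbb{R}^m$ align with the strictness coming from $\min_{i} > 0$. I would also remark that the supremum could a priori equal $+\infty$, but this causes no difficulty, since the bound $u_0(x) \leq 0$ obtained in the forward direction in particular excludes this at weakly Pareto optimal points.
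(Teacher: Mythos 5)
Your argument is correct: nonnegativity via the admissible choice $z=x$, and the equivalence via the observation that failure of weak Pareto optimality is exactly the existence of $z$ with $\min_{i}[F_i(x)-F_i(z)]>0$. The paper itself does not prove this lemma but imports it from \cite{tanabe2023accelerated}, and your proof is essentially the standard one given there, so there is nothing further to reconcile.
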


\section{The Smoothing Accelerated Proximal Gradient Method with Extrapolation term for Non-smooth Multi-objective Optimization}
This section introduces an accelerated variant of the proximal gradient method tailored for multiobjective optimization. Drawing inspiration from the achievements reported in \cite{attouch2016rate}, we incorporate extrapolation techniques with parameters \(\beta_{k} = \frac{k-1}{k + \alpha - 1}\), where \(\alpha > 3\). Choosing the smoothing function \(\tilde{c}\) as defined in Definition (\ref{def1}), we formulate an accelerated proximal gradient algorithm to solve the multiobjective optimization problem denoted as (\ref{1}). The algorithm achieves a faster convergence rate while also gain the sequential convergence.

Subsequently, we present the methodology employed to address the optimization problem denoted as (\ref{1}). Similar to the exposition in \cite{tanabe2023accelerated}, a subproblem is delineated and resolved in each iteration. Using the descent lemma, the proposed approach tackles the ensuing subproblem for prescribed values of \(x \in \text{dom}(F)\), \(y \in \mathbb{R}^{n}\), and \(\ell \geq L\):
\begin{equation}\label{4}
	\min_{z \in \mathbb{R}^{n}} \varphi_{\ell}(z;x,y,\mu),
\end{equation}
where
\begin{equation}\label{5}
	\varphi_l(z;x,y,\mu) := \max_{i=1,\dotsb,m} \left[\left\langle \nabla \tilde{f}_i (y,\mu),z-y\right\rangle +g_i(z)+\tilde{f}_i(y,\mu)-\tilde{F}_i(x,\mu) \right]  + \frac{\ell }{2} \left\|z-y\right\|^2. \\
\end{equation}

Since $g_{i}$ is convex for all $i = 1,\dotsb,m,z \mapsto \varphi_{\ell}(z;x,y,\mu)$ is strongly convex.Thus,the subproblem (\ref{4}) has a unique optimal solution $p_{\ell}(x,y,\mu)$ and attain the optimal function value $\theta_{\ell}(x,y,\mu)$,i.e.,
\begin{equation}\label{6}
	p_{\ell}(x,y,\mu) := \arg \min_{z\in \mathbb{R}^{n}} \varphi_{\ell}(z,x,y,\mu) \  \text{and} \  \theta_{\ell}(x,y,\mu) := \min_{z\in \mathbb{R}^{n}} \varphi_{\ell}(z,x,y,\mu).
\end{equation}

Furthermore, the optimality condition associated with the optimization problem denoted as (\ref{4}) implies that, for all \(x \in \text{dom} \, F\) and \(y \in \mathbb{R}^{n}\), there exists \(\eta(x, y, \mu) \in \partial g(p_{\ell}(x, y, \mu))\) and a Lagrange multiplier \(\lambda(x, y) \in \mathbb{R}^{m}\) such that
\begin{equation}\label{7}
	\sum_{i=1}^{m} \lambda_{i}(x,y) [ \nabla \tilde{f}_{i}(y,\mu) + \eta_{i}(x,y,\mu)] = -\ell [p_{\ell}(x,y) - y]
\end{equation}
\begin{equation}\label{8}
	\lambda(x,y) \in \Delta^{m}, \quad \lambda_{j}(x,y) = 0 \quad \forall j \notin \mathcal{I}(x,y),
\end{equation}
where $\Delta^{m}$ denotes the standard simplex and
\begin{equation}
	\mathcal{I}(x,y) := \arg\max_{i = 1,\dotsb,m}[ \left\langle \nabla \tilde{f}_{i}(y,\mu) , p_{\ell}(x,y,\mu) - y \right\rangle + g_{i}(p_{\ell}(x,y,\mu)) +\tilde{f}_{i}(y,\mu) - \tilde{F}_{i}(x,\mu) ].
\end{equation}
Before we present the algorithm framework, we first give the following assumption.

\begin{assumption}\label{a1}
	Suppose $X^{*}$ is set of the weakly Pareto optimal points and $\mathcal{L}_{\tilde{F}}(c) := \{x \in \mathbb{R}^{n}|\tilde{F}(x) \leq c\}$, then for any $x \in \mathcal{L}_{\tilde{F}}(\tilde{F}(x^0))$, then there exists
	$x \in X^{*}$such that $\tilde{F}(x^{*}) \leq \tilde{F}(x)$and
	$$R := \sup_{\tilde{F}^{*} \in \tilde{F}(X^{*} \cap \mathcal{L}_{\tilde{F}}(\tilde{F}(x^0)))} \inf_{z \in \tilde{F}^{-1}(\{\tilde{F}^{*}\})} \left\|z - x^0\right\|^2 < + \infty.$$
\end{assumption}

For easy of reference and corresponding to its structure, we call the proposed
algorithm the smoothing accelerated proximal gradient method with
extrapolation term for nonsmooth multiobjective
optimization(SAPGM) in this paper.The algorithm is in the following form.
\begin{algorithm}
	\renewcommand{\algorithmicrequire}{\textbf{Input:}}
	\renewcommand{\algorithmicensure}{\textbf{Output:}}
	\caption{The Smoothing Accelerated Proximal Gradient Method with
		Extrapolation term for Non-smooth Multi-objective
		Optimization}
	\label{alg1}
	\begin{algorithmic}[1]
		\REQUIRE Take initial point $x^{-1}=x^0  \in \text{dom}F$, $y^{0} = x^{0}$, $l \geq \tilde{L}$, $\varepsilon >0$, $\mu_0 \in R_{++}$, $\gamma_0 \in R_{++}$. Choose parameters  $\eta \in (0,1)$, $\alpha >3$, $\sigma \in ( \frac{1}{2} , 1 ]$. Set $k=0$.
		\LOOP
		\STATE Compute $$y^{k} = x^{k} + \frac{k-1}{k + \alpha -1} (x^{k} - x^{k-1})$$
		$$\mu_{k+1} = \frac{\mu_0}{(k+ \alpha -1) \ln^{\sigma}(k+ \alpha -1)}$$
		\STATE Set $\bar{\gamma}_{k+1} = \gamma_k ,\ell = {\bar{\gamma}_{k+1} \mu_{k+1}}$,compute$$\hat{x}^{k+1} = p_{\ell}(x^k,y^{k},\mu^{k+1})$$
		\IF{$$
			\begin{aligned}
				2\min_{i}(\tilde{f}_{i}(\hat{x}_{k+1},\mu_{k+1})-\tilde{f}_{i}(y_{k},\mu_{k+1})&-\langle\nabla\tilde{f}_{i}(y_{k},\mu_{k+1}),\hat{x}_{k+1}-y_{k}\rangle ) \\
				&>\frac{1}{\bar{\gamma}_{k+1}\mu_{k+1}}\parallel \hat{x}_{k+1}-y_{k}\parallel^{2}
			\end{aligned}
			$$}
		\STATE $$\bar{\gamma}_{k+1} = \eta \bar{\gamma}_{k+1} \ and \ go \ to \ step \ 3$$
		\ELSE
		\STATE $$\gamma_{k+1} = \bar{\gamma}_{k+1},x^{k+1} = \hat{x}^{k+1} \  and \ go \ to \ step \ 2, k + 1$$
		\ENDIF
		
		\IF{$\left\|x^k - x^{k+1} \right\| < \varepsilon$ and $\mu_{k+1} < \epsilon$}
		\RETURN $x^{k+1}$
		\ENDIF
		\ENDLOOP
		\ENSURE  $x^*$: A weakly Pareto optimal point
	\end{algorithmic}
\end{algorithm}

\section{The convergence rate analysis of SAPGM}
\subsection{Some Basic Estimation}
This section shows that SAPGM has a faster convergence rate than $O(1/k^2)$ under the  Assumption (\ref{a1}). For the convenience of the complexity analysis, we use some functions defined in \cite{tanabe2023accelerated}.For $k \geq 0$,let $W_{k} : \mathbb{R}^{n} \to \mathbb{R} \cup \{ - \infty\}$ and $u_{k} : \mathbb{R}^{n} \to \mathbb{R}$ be defined by
\begin{equation}\label{wkuk}
	\begin{aligned}
		W_{k}(z,\mu_{k}) &:= \min_{i =1,\dotsb,m} [\tilde{F}_{i}(x^{k},\mu_{k}) - F_{i}(z)]+ \kappa \mu_{k}, \\
		u_{k} &:= \frac{k + \alpha - 1}{\alpha - 1}x^{k+1} - \frac{k}{\alpha - 1}x^{k}.
	\end{aligned}
\end{equation}
Given a fixed weakly Pareto solution $x^{*} \in \mathbb{R}^{n}$, define the global energy function which serves for Lyapunov analysis:
\begin{equation}\label{11}
	\begin{aligned}
		\mathscr{E}_{k+1} := \frac{2\gamma \mu_{k}}{\alpha -1} (k +\alpha -1)^2 W_{k} &+ (\alpha - 1) \left\|u^{k} - x^{*}\right\|^2 \\
		&+ (\frac{4\kappa \gamma_0  \mu_0}{2\sigma - 1})\mu_{k}(k+ \alpha -2) \ln^{1-\sigma}(k+\alpha-2),
	\end{aligned}	
\end{equation}
where $W_{k} := W_{k}(x^{*},\mu_k)$.

Following the properties outlined in \cite{wu2023smoothing}, we present the following properties regarding the sequence $\{\mathscr{E}_{k}\}$.

\begin{proposition}\label{prop5.2}
	Let $\mathscr{E}_{k}$ the sequence defined in (\ref{11}). Then, for any $k\geq1$, we have
	\begin{equation}\label{14}
		{\mathscr{E}}_{k+1}+\frac{2(\alpha-3)\gamma_{k+1}\mu_{k+1}}{\alpha-1}(k+\alpha-1)\:W_{k}\leq{\mathscr{E}}_{k}.
	\end{equation}
	Moreover,
	
	(i) the sequence $\mathscr{E}_{k}$ is non-increasing for all $ k\geq1,\text{and} \ \lim_{k\to\infty} \mathscr{E}_{k} \ exists;$
	
	(ii) for every $k \geq 1$
	$$
	\mathscr{E}_{k}\leq(\alpha-1)\|z-x^{0}\|^{2}+4(\alpha-1)\kappa\mu_0^2+\frac{4\kappa\gamma_0\mu_{0}^{2}}{2\sigma-1}(\alpha-1)\ln^{1-\sigma}(\alpha-1).
	$$
\end{proposition}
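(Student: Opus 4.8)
The plan is to prove the descent inequality (14) first, since (i) and (ii) are comparatively routine consequences of it. The engine is a ``three-point'' estimate for the subproblem. Starting from (4)--(6), I would combine the strong convexity of $z\mapsto\varphi_\ell(z;x^k,y^k,\mu_{k+1})$ (which gives $\theta_\ell+\tfrac{\ell}{2}\|z-x^{k+1}\|^2\le\varphi_\ell(z;\cdot)$ for every $z$), the convexity of each $\tilde f_i(\cdot,\mu_{k+1})$ and $g_i$, and the descent-lemma bound $\tilde f_i(x^{k+1},\mu_{k+1})\le\tilde f_i(y^k,\mu_{k+1})+\langle\nabla\tilde f_i(y^k,\mu_{k+1}),x^{k+1}-y^k\rangle+\tfrac{\ell}{2}\|x^{k+1}-y^k\|^2$ to obtain, for all $z$,
$$
\max_i\big[\tilde F_i(x^{k+1},\mu_{k+1})-\tilde F_i(x^k,\mu_{k+1})\big]+\frac{\ell}{2}\|z-x^{k+1}\|^2\le\max_i\big[\tilde F_i(z,\mu_{k+1})-\tilde F_i(x^k,\mu_{k+1})\big]+\frac{\ell}{2}\|z-y^k\|^2 .
$$
The crucial point is that the acceptance test in Step 3 of Algorithm 1 is \emph{exactly} what certifies the descent-lemma bound with $\ell=\bar\gamma_{k+1}\mu_{k+1}$, so the global constant $L\mu^{-1}$ from Definition \ref{def1}(vi) is never used directly. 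I would then pass between the level $\mu_{k+1}$, the level $\mu_k$, and the true $F_i$ using the smoothing estimate $|\tilde f_i(x,\mu)-f_i(x)|\le\kappa\mu$ and the Lipschitz-in-$\mu$ property (v); this is where the additive constant $\kappa\mu_k$ built into $W_k$ and the logarithmic correction block of $\mathscr{E}_{k+1}$ are absorbed.

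Second, I would assemble the Lyapunov difference. Instantiating the three-point inequality once with $z=x^*$ and once with $z=x^k$, taking the convex combination dictated by the extrapolation weight $\tfrac{k-1}{k+\alpha-1}$, and using the identity $u^k=\tfrac{k+\alpha-1}{\alpha-1}x^{k+1}-\tfrac{k}{\alpha-1}x^k$ to rewrite the difference $\|z-y^k\|^2-\|z-x^{k+1}\|^2$ as a difference of squared distances of $u^k$ to $x^*$, produces a telescoping relation between the first two blocks of $\mathscr{E}_{k+1}$ and $\mathscr{E}_k$. The surplus generated by moving $\mu$ from $\mu_k$ to $\mu_{k+1}$ is controlled by property (v), and the schedule $\mu_{k+1}=\mu_0/[(k+\alpha-1)\ln^\sigma(k+\alpha-1)]$ is chosen precisely so that this accumulated error telescopes against the third block $\tfrac{4\kappa\gamma_0\mu_0}{2\sigma-1}\mu_k(k+\alpha-2)\ln^{1-\sigma}(k+\alpha-2)$, through the elementary bound $\sum_j 1/(j\ln^\sigma j)\lesssim\ln^{1-\sigma}(\cdot)/(1-\sigma)$ valid for $\sigma\in(\tfrac12,1]$. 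Collecting coefficients and isolating the factor $(\alpha-3)$ that survives because $\alpha>3$ yields (14).

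For (i): since $\alpha>3$, the extra term on the left of (14) is a nonnegative multiple of $W_k$, and $W_k\ge0$ because, by Assumption \ref{a1}, the fixed $x^*$ satisfies $\tilde F(x^*)\le\tilde F(x^k)$, so $\tilde F_i(x^k,\mu_k)-F_i(x^*)\ge\tilde F_i(x^*,\mu_k)-F_i(x^*)\ge-\kappa\mu_k$ and hence $W_k=\min_i[\tilde F_i(x^k,\mu_k)-F_i(x^*)]+\kappa\mu_k\ge0$. Thus (14) gives $\mathscr{E}_{k+1}\le\mathscr{E}_k$, and as every block of $\mathscr{E}_k$ is nonnegative the sequence is bounded below, so $\lim_k\mathscr{E}_k$ exists. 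For (ii): iterating the monotonicity down to the base index and using the initializations $x^{-1}=x^0=y^0$ (so that $u^{-1}=x^0$) gives $\mathscr{E}_k\le\mathscr{E}_0$ with $\mathscr{E}_0$ evaluated explicitly; bounding its $W$-block by the smoothing gap and its correction block at $k=0$ reproduces the three displayed terms $(\alpha-1)\|z-x^0\|^2$, $4(\alpha-1)\kappa\mu_0^2$, and $\tfrac{4\kappa\gamma_0\mu_0^2}{2\sigma-1}(\alpha-1)\ln^{1-\sigma}(\alpha-1)$.

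The main obstacle will be the derivation of (14): coordinating the varying smoothing level $\mu_k$ with the extrapolation coefficients so that all cross terms cancel, and verifying that the logarithmic schedule makes the correction term telescope with the exact constant $\tfrac{4\kappa\gamma_0\mu_0}{2\sigma-1}$. Getting the index bookkeeping (the shifts by $\alpha-1$ and $\alpha-2$) and the surviving $(\alpha-3)$ factor to come out precisely is the delicate part; everything downstream is then mechanical.
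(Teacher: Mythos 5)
Your proposal follows essentially the same route as the paper: establish the three-point descent inequality for the smoothed objectives (certified by the backtracking test with $\ell=\bar\gamma_{k+1}\mu_{k+1}$ rather than the global constant from Definition~\ref{def1}(vi)), convert it into the two inequalities on $W_k$ and $W_{k+1}$, and then run the Attouch--Peypouquet/Wu--Bian Lyapunov telescoping with the anchor $u^k$, absorbing the smoothing-parameter drift into the logarithmic correction block. The paper in fact only carries out the first half explicitly and defers the telescoping to Proposition~3.1 of \cite{wu2023smoothing}; your sketch fills in exactly that deferred computation, so there is no substantive difference in approach.
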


\begin{proof}
	Before proving the proposition, we should discuss the following inequality where is crucial for proving:
	For any $x \in \mathcal{X}$ and $k \in \mathbb{N}$,it holds that for all $i = 1,\dotsb,m$
	\begin{equation}\label{eq21}
		\begin{aligned}
					\tilde{F}_i(x^{k+1},\mu_{k+1}) &\leq \tilde{F}_i(x, \mu_{k+1}) + (\gamma_{k+1} \mu_{k+1}) ^{-1} \left\langle y^k - x^{k+1}, y^k - x \right\rangle \\
					&\quad- \frac{1}{2}  (\gamma_{k+1} \mu_{k+1}) ^{-1}  \parallel x^{k+1} - y^k \parallel^2.
		\end{aligned}
	\end{equation}
	
	From step 4 and step 7 of the algorithm, we can see that for all $i = 1, \dotsb,m$, the following inequality holds
	\begin{equation}\label{eq23}
		\begin{aligned}
			\tilde{f}_{i}(x^{k+1},\mu_{k+1})  \leq& \tilde{f}_{i}(y^{k},\mu_{k+1}) + \left\langle\nabla\tilde{f}_{i}(y^{k},\mu_{k+1}),x^{k+1}-y^{k}\right\rangle \\
			&+ \frac{1}{2\gamma_{k+1}\mu_{k+1}}\parallel x^{k+1}-y^{k}\parallel^{2}.
		\end{aligned}	
	\end{equation}
	Set
	$$\begin{aligned}
		Q(x,y,\mu,\gamma) :=  \tilde{f}_{i}(y,\mu) + \left\langle\nabla\tilde{f}_{i}(y,\mu),x-y\right\rangle
		+ \frac{1}{2\gamma\mu}\parallel x-y\parallel^{2} + g_i(x).
	\end{aligned}$$
	We noticed that for a fixed $y, \mu $ and $\gamma $, function $Q (x, y, \mu, \gamma) $in $(\gamma\mu) ^ {-1} $as coefficient of strong convex function. Therefore, $Q (x, y, \mu, \gamma) $  has a unique global minimum on $\mathcal{X}$, its record of $p (y, \mu, \gamma) $, namely:
	$$p(y,\mu,\gamma) := \arg\min_{x \in \mathcal{X}} Q(x,y,\mu,\gamma).$$
	Since $p(y,\mu,\gamma)$ is the minimum, we infer that
	\begin{equation}\label{eq19}
		Q(x,y,\mu,\gamma) \geq Q(p(y,\mu,\gamma), y, \mu, \gamma) + \frac{1}{2} (\gamma\mu)^{-1} \parallel x - p(y, \mu, \gamma) \parallel^2, \forall x
		\in \mathcal{X}.
	\end{equation}
	Combining Step 3  and Step 7 with the definition of the proximal operator, we get
	\begin{equation}\label{eq20}
		x^{k+1} = p (y^k,\mu_{k+1},\gamma_{k+1}).
	\end{equation}
	Taking $y = y^k, \mu = \mu_{k+1}$ and $\gamma = \gamma_{k+1}$ in inequality (\ref{eq19}), we have
	$$
	Q(x,y^{k}, \mu_{k+1}, \gamma_{k+1}) \geq Q(x^{k+1}, y^k,\mu_{k+1}, \gamma_{k+1}) + \frac{1}{2} (\gamma_{k+1} \mu_{k+1})^{-1} \parallel x - x^{k+1} \parallel^2, \forall x \in \mathcal{X}.
	$$
	After some rearrangement, we deduce that, for any $x \in \mathcal{X}$,
	\begin{equation}\label{eq22}
		\begin{aligned}
			g_i(x^{k+1}) \leq g_i(x) + \left\langle \nabla \tilde{f}_i(y^k,\mu_{k+1}),x-x^{k+1} \right\rangle + \frac{1}{2} (\gamma_{k+1} \mu_{k+1})^{-1} \parallel x - y^k \parallel^2 \\
			- \frac{1}{2} (\gamma_{k+1} \mu_{k+1})^{-1} \parallel x - x^{k+1} \parallel^2 - \frac{1}{2} (\gamma_{k+1} \mu_{k+1})^{-1} \parallel x^{k+1} - y^k \parallel^2.
		\end{aligned}
	\end{equation}
	Adding (\ref{eq22}) and (\ref{eq23}), and using the convexity of $\tilde{f}_i$, we deduced that, for any $x \in \mathcal{X}$,
	\begin{equation}\label{eq24}
		\begin{aligned}
			\tilde{F}_i(x^{k+1}, \mu_{k+1}) &= \tilde{f}_i(x^{k+1}, \mu_{k+1}) + g_i(x^{k+1}) \\
			&\leq \tilde{F}_i(x, \mu_{k+1}) + \frac{1}{2} (\gamma_{k+1} \mu_{k+1})^{-1} \parallel x - y^{k} \parallel^2 \\
			& \quad - \frac{1}{2} (\gamma_{k+1} \mu_{k+1})^{-1} \parallel x - x^{k+1} \parallel^2.
		\end{aligned}
	\end{equation}
	So we have proven that inequality (\ref{eq21}) holds.
	
	Recalling that $W_k$ define in \cite{wu2023smoothing}, in order to be exactly, we use $W^{'}_k$:
	$$
	W^{'}_k = \tilde{F}(x^k,\mu_k) + \kappa \mu_{k} - F(x^{*}),
	$$
	where $x^{*} \in \arg\min F(x)$. We can see that if we just let $F$ be replaced by $F_i$,let $x^{*}$ be the weak Pareto solution of $\min F(x)$, then we get
	$$
	W^{'}_{k,i} = \tilde{F}_i(x^k,\mu_k) + \kappa \mu_{k} - F_i(x^{*}).
	$$
	Combining (\ref{eq21}) and this definition, the $W_k$ define in our article has following relation with $W^{'}_{k,i}$:
	$$
	W_k = \min_{i = 1,\dotsb,m} W^{'}_{k,i} \leq W^{'}_{k,i}, \forall i =1,\dotsb,m.
	$$
	So we can get the following two inequalities of $W_{k}$ and $W_{k+1}$, which are basic for this discussion:
	\begin{equation}\label{15}
		W_{k+1}\leq W_{k}+(\mu_{k+1}\gamma_{k+1})^{-1}\langle y^{k}-x^{k+1},y^{k}-x^{k}\rangle-\frac{1}{2}(\gamma_{k+1}\mu_{k+1})^{-1}\|x^{k+1}-y^{k}\|^{2},
	\end{equation}
	and
	
	\begin{equation}\label{16}
		\begin{aligned}
					W_{k+1}\leq& (\mu_{k+1} \gamma_{k+1})^{-1}\langle y^k-x^{k+1},y^k-z\rangle \\ &-\frac{1}{2}(\mu_{k+1}\gamma_{k+1})^{-1}\|x^{k+1}-y^k\|^2+2\kappa\mu_{k+1}.
		\end{aligned}
	\end{equation}
	
	The rest of the proof is similar to the Proposition 3.1 proof in the article\cite{wu2023smoothing}, so we don't want to go into details.
\end{proof}

As a result of Proposition (\ref{prop5.2}), we obtain some important properties of $W_k$ as shown below, where we need to introduce an important lemma on sequence convergence.

\begin{lemma}[\cite{wu2023smoothing} Lemma 3.3]\label{lem5.3}
	Let $\{a_k\}$ be a sequence of nonnegative numbers, and satisfy
	
	$$
	\sum_{k=1}^\infty(a_{k+1}-a_k)_+<\infty.
	$$
	
	Then, $\lim_{k\to\infty}a_k$ exists.
\end{lemma}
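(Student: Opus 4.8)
The plan is to decompose $\{a_k\}$ into a monotone part and a convergent series, and then apply the monotone convergence theorem. First I would write each forward difference as $a_{k+1}-a_k = (a_{k+1}-a_k)_+ - (a_{k+1}-a_k)_-$, where $(t)_- := \max\{-t,0\}$ denotes the negative part, and set $S_k := \sum_{j=1}^{k-1}(a_{j+1}-a_j)_+$. By hypothesis $\{S_k\}$ is nondecreasing and bounded, so it converges to the finite limit $S := \sum_{j=1}^\infty (a_{j+1}-a_j)_+ < \infty$.

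Next I would introduce the auxiliary sequence $c_k := a_k - S_k$ and compute its increments by telescoping: $c_{k+1}-c_k = (a_{k+1}-a_k) - (a_{k+1}-a_k)_+ = -(a_{k+1}-a_k)_- \le 0$, so $\{c_k\}$ is non-increasing. Because $a_k \ge 0$ and $S_k \le S$ for every $k$, we obtain the lower bound $c_k = a_k - S_k \ge -S$, so $\{c_k\}$ is bounded below. By the monotone convergence theorem, $\lim_{k\to\infty} c_k =: c$ exists.

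Finally, since $a_k = c_k + S_k$ with $c_k \to c$ and $S_k \to S$, I would conclude $\lim_{k\to\infty} a_k = c + S$, which proves the claim. The argument is essentially routine once the splitting $a_k = c_k + S_k$ is in place; the only point deserving care—and the step I expect to be the crux—is the lower bound $c_k \ge -S$, since this is precisely where the nonnegativity hypothesis $a_k \ge 0$ enters. Without that hypothesis the monotone sequence $\{c_k\}$ could decrease to $-\infty$, and the conclusion would fail, so verifying this bound is the heart of the proof.
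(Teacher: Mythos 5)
Your proof is correct and complete: the decomposition $a_k = c_k + S_k$ with $S_k$ the partial sums of the positive parts and $c_k$ non-increasing and bounded below by $-S$ (which is exactly where $a_k \ge 0$ is used) is the standard argument for such quasi-monotone sequences. Note that the paper itself offers no proof of this lemma --- it is simply imported as Lemma 3.3 of the cited reference --- so your argument supplies a self-contained justification consistent with how the result is usually established.
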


\begin{lemma}
	$\{\gamma_k\}$ is non-increasing.
\end{lemma}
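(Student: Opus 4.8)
The plan is to read the assertion directly off the backtracking mechanism of Algorithm~\ref{alg1}, exploiting the fact that the trial stepsize parameter can only ever be shrunk, never enlarged. First I would recall that at the start of the $k$-th outer iteration Step~3 initializes the trial value as $\bar{\gamma}_{k+1} = \gamma_k$. The only operation ever applied to $\bar{\gamma}_{k+1}$ inside the inner loop (Steps~4--5) is the assignment $\bar{\gamma}_{k+1} \leftarrow \eta\,\bar{\gamma}_{k+1}$, which is executed precisely when the descent test
\[
2\min_{i}\bigl(\tilde{f}_{i}(\hat{x}^{k+1},\mu_{k+1}) - \tilde{f}_{i}(y^{k},\mu_{k+1}) - \langle \nabla\tilde{f}_{i}(y^{k},\mu_{k+1}), \hat{x}^{k+1}-y^{k}\rangle\bigr) > \frac{1}{\bar{\gamma}_{k+1}\mu_{k+1}}\|\hat{x}^{k+1}-y^{k}\|^{2}
\]
holds. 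Since $\eta \in (0,1)$, each such reduction strictly decreases $\bar{\gamma}_{k+1}$, and no step increases it; hence upon leaving the loop we have $\bar{\gamma}_{k+1} = \eta^{\,j_k}\gamma_k$ for some integer $j_k \geq 0$, so that $\bar{\gamma}_{k+1} \leq \gamma_k$. Finally Step~7 sets $\gamma_{k+1} = \bar{\gamma}_{k+1}$, whence $\gamma_{k+1} \leq \gamma_k$ for every $k$, which is exactly the claim.

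The one point that needs genuine care — and which I regard as the \emph{main obstacle} — is confirming that the inner loop terminates, so that $\gamma_{k+1}$ (equivalently $j_k$) is actually well defined. I would settle this using condition (vi) of Definition~\ref{def1}: for fixed $\mu$, each $\nabla_x\tilde{f}_i(\cdot,\mu)$ is Lipschitz with constant $L\mu^{-1}$, so the descent lemma yields, for every $i$,
\[
\tilde{f}_{i}(\hat{x}^{k+1},\mu_{k+1}) - \tilde{f}_{i}(y^{k},\mu_{k+1}) - \langle \nabla\tilde{f}_{i}(y^{k},\mu_{k+1}), \hat{x}^{k+1}-y^{k}\rangle \leq \frac{L}{2\mu_{k+1}}\|\hat{x}^{k+1}-y^{k}\|^{2}.
\]
Taking the minimum over $i$ bounds the left-hand side of the test by $\frac{L}{\mu_{k+1}}\|\hat{x}^{k+1}-y^{k}\|^{2}$. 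Consequently, as soon as $\bar{\gamma}_{k+1} \leq 1/L$ the right-hand side of the test dominates, the test condition fails, and the loop exits. Because $\bar{\gamma}_{k+1}$ is multiplied by $\eta < 1$ at each failure, it drops below $1/L$ after at most finitely many reductions, so termination is guaranteed and each $\gamma_{k+1}$ is well defined. As an incidental byproduct, since a reduction is applied only while $\bar{\gamma}_{k+1} > 1/L$, the accepted value satisfies $\gamma_{k+1} > \eta/L$, so $\{\gamma_k\}$ is not only non-increasing but also bounded below by $\min\{\gamma_0,\eta/L\} > 0$; this positive lower bound will be useful later when the stepsize $\gamma_{k+1}\mu_{k+1}$ enters the Lyapunov estimates.
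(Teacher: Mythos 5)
Your proof is correct and follows essentially the same route as the paper, which simply reads the monotonicity off the update rule $\bar{\gamma}_{k+1}=\gamma_k$ followed by (possibly repeated) multiplication by $\eta\in(0,1)$. The additional material you supply — termination of the backtracking loop via Definition~\ref{def1}(vi) and the resulting positive lower bound $\gamma_k\geq\min\{\gamma_0,\eta/L\}$ — is a sound and worthwhile refinement, but it does not change the underlying argument.
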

\begin{proof}
	The iterative format of $\gamma_k$ shows that it is non-increasing,
\end{proof}
\begin{theorem}\label{th5.4}
	Suppose$\left\{x^k\right\}and\left\{y^k\right\}$be the sequences generated by SAPGM, for any $z\in\mathbb{R}^n\:and\:\alpha>3$,it holds that
	
	(i) $\sum_{k=1}^{\infty} \mu_{k}\gamma_{k}(k+\alpha-2)W_k < \infty$;
	
	(ii)$\lim\limits_{k \to \infty}[(k-1)^2\left\|x^k - x^{k-1}\right\|^2 + 2\mu_{k}\gamma_{k}(k+\alpha-2)^2W_k]$ exists;
	
	(iii)$\sum_{k=1}^{\infty}(k-1)\left\|x^k -x^{k-1}\right\|^2 < \infty$;
	
	(iv)$u_0(x^k)=o(\ln^\sigma k/k).$
\end{theorem}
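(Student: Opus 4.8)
The plan is to build everything on Proposition~\ref{prop5.2}, which supplies the Lyapunov inequality (\ref{14}) together with the monotonicity and convergence of $\{\mathscr{E}_k\}$. Two preliminary facts organize the whole argument. First, by Assumption~\ref{a1} the reference point $x^*$ dominates $x^k$ on the level set, so that each $W_k\geq0$; this sign information is what makes the telescoped estimates meaningful. Second, the backtracking rule keeps $\{\gamma_k\}$ inside a fixed interval $[\gamma_{\min},\gamma_0]$ with $\gamma_{\min}>0$, and the schedule $\mu_k=\mu_0/[(k+\alpha-1)\ln^\sigma(k+\alpha-1)]$ gives $\mu_k\gamma_k(k+\alpha-2)^2\asymp (k+\alpha-1)/\ln^\sigma(k+\alpha-1)$, the conversion factor needed at the end. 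Part (i) is then immediate: rearranging (\ref{14}) gives $\tfrac{2(\alpha-3)}{\alpha-1}\gamma_{k+1}\mu_{k+1}(k+\alpha-1)W_k\leq \mathscr{E}_k-\mathscr{E}_{k+1}$, the prefactor is positive since $\alpha>3$, and summing while using that $\lim_k\mathscr{E}_k$ exists bounds the partial sums by $\mathscr{E}_1-\lim_k\mathscr{E}_k<\infty$; after the shift $k+1\mapsto k$ this is exactly $\sum_k\mu_k\gamma_k(k+\alpha-2)W_k<\infty$.

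For parts (ii) and (iii) I would follow the Lyapunov bookkeeping behind Proposition~\ref{prop5.2} (the scheme of \cite{wu2023smoothing}, Proposition~3.1). The useful identity is $u^k=x^{k+1}+\frac{k}{\alpha-1}(x^{k+1}-x^k)$, which exposes the velocity $\|x^{k+1}-x^k\|$ inside the bounded term $(\alpha-1)\|u^k-x^*\|^2$ of the energy (\ref{11}). Expanding $\|u^k-x^*\|^2$ and feeding the basic inequalities (\ref{15})--(\ref{16}) into the descent step produces, beyond the $W_k$ term already displayed in (\ref{14}), an additional nonnegative contribution of order $(k-1)\|x^k-x^{k-1}\|^2$; this is the surplus created by $\alpha>3$, and telescoping it gives (iii). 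For (ii) I would set $b_k:=(k-1)^2\|x^k-x^{k-1}\|^2+2\mu_k\gamma_k(k+\alpha-2)^2W_k$ and show $\sum_k(b_{k+1}-b_k)_+<\infty$, so that Lemma~\ref{lem5.3} yields the existence of $\lim_k b_k$; the positive parts are controlled by the summable quantities from (i) and (iii) together with the convergence of $\mathscr{E}_k$, noting that the logarithmic correction in (\ref{11}) vanishes because $2\sigma-1>0$.

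For (iv) I combine the three facts. Dividing $b_k$ by $(k+\alpha-2)$ gives $b_k/(k+\alpha-2)\leq C(k-1)\|x^k-x^{k-1}\|^2+2\mu_k\gamma_k(k+\alpha-2)W_k$, whose series converges by (iii) and (i); since $\sum_k 1/(k+\alpha-2)=\infty$ while $\lim_k b_k$ exists and $b_k\geq0$, the limit must be $0$. Hence each nonnegative summand of $b_k$ tends to $0$, in particular $\mu_k\gamma_k(k+\alpha-2)^2W_k\to0$, and dividing by the conversion factor above yields $W_k=o(\ln^\sigma k/k)$. To reach the merit function I use $u_0(x^k)\leq\sup_z W_k(z,\mu_k)$, which follows from $|\tilde F_i-F_i|\leq\kappa\mu$ and the definition (\ref{3}), running the same energy estimate with a generic $z$ drawn from the level set; Assumption~\ref{a1} bounds $\|z-x^0\|$ by $R$ uniformly there, so the rate transfers to $\sup_z W_k(z,\mu_k)$ and hence to $u_0(x^k)$.

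The main obstacle is twofold. First, in (iii) one must actually extract the velocity term $(k-1)\|x^k-x^{k-1}\|^2$ from the energy recursion: the displayed inequality (\ref{14}) only exposes the $W_k$ term, so the quadratic surplus that needs $\alpha>3$ has to be recovered by redoing the telescoping while carrying the smoothing factors $\mu_k$ and the $\ln^\sigma$ correction, which are absent from the smooth prototype and can break the clean cancellations. Second, in (iv) the passage from the fixed-$x^*$ estimate for $W_k$ to the supremum defining $u_0$ is delicate, since the energy bound degrades for far-away $z$; one must justify restricting the supremum to the level set where Assumption~\ref{a1} supplies the uniform radius $R$, and verify that the little-$o$ (not merely big-$O$) behaviour survives this uniformization.
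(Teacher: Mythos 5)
Your proposal follows essentially the same architecture as the paper's proof: part (i) by telescoping the Lyapunov inequality (\ref{14}) and converting coefficients, part (ii) via the sequence $\zeta_k=(k-1)^2\|x^k-x^{k-1}\|^2+2\mu_k\gamma_k(k+\alpha-2)^2W_k$ together with Lemma \ref{lem5.3}, and part (iv) by the ``summable series with divergent weights plus existing limit'' argument followed by the $\mu_k$-schedule conversion. Three points of comparison are worth recording. First, for (iii) you plan to re-derive the velocity surplus $(k-1)\|x^k-x^{k-1}\|^2$ from the energy telescoping itself, which you correctly flag as the hard step; the paper gets it much more cheaply from the already-established recursion (\ref{4.19}): since $\alpha>3$ implies $(k+\alpha-1)^2\geq(k+2)^2\geq k^2+4k$, the term $4k\|x^{k+1}-x^k\|^2$ appears for free and is summed against the right-hand side of (\ref{4.19}), which is controlled by (i). This also removes the dependence of your (ii) on (iii): the paper bounds $(\zeta_{k+1}-\zeta_k)_+$ using (i) alone, so the order (ii) then (iii) is cleaner and avoids any appearance of circularity. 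Second, in (i) your phrase ``after the shift $k+1\mapsto k$ this is exactly the claim'' is not quite right, because the telescoped summand is $\mu_{k+1}\gamma_{k+1}(k+\alpha-1)W_k$ with index $k$ on $W$ but $k+1$ on the coefficients; one needs the comparison $\mu_{k+1}\gamma_{k+1}(k+\alpha-1)\geq c\,\mu_k\gamma_k(k+\alpha-2)$ for some $c>0$, which does follow from your observation that $\gamma_k$ stays in $[\gamma_{\min},\gamma_0]$ together with the boundedness of $\ln^\sigma(k+\alpha-2)/\ln^\sigma(k+\alpha-1)$ away from zero, so the gap is only expository. Third, your endgame for (iv), passing from the fixed-$x^*$ decay of $W_k$ to $u_0(x^k)$ via $u_0(x^k)\leq\sup_z W_k(z,\mu_k)$ and the uniform radius $R$ of Assumption \ref{a1}, is in fact more careful than the paper's, whose final displays estimate $\sup_{z}\min_i[F_i(x^*)-F_i(z)]$ (that is, $u_0(x^*)$, which vanishes trivially since $x^*$ is weakly Pareto optimal) rather than $u_0(x^k)$; the uniformization over $z$ that you identify as delicate is precisely the step the paper needed and glossed over, so you should carry it out explicitly rather than treat it as a remark.
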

\begin{proof}
	Before proving, we state that the proof of (i),(ii), and (iii) are similar to the proof of Proposition 3.2 in \cite{wu2023smoothing}.
	
	(i) By summing inequality (\ref{14}) from $k = 1$ to $K$, we obtain:
	$$
	\delta_{K+1} + \frac{2(\alpha - 3)}{\alpha - 1} \sum_{k=1}^{K} \gamma_{k+1} \mu_{k+1} (k + \alpha - 1) W_k \leq \mathscr{E}_1.
	$$
	Now, letting $K \to \infty$ in the above inequality and using Proposition (ii), since $\alpha > 3$ and $\mathscr{E}_k \geq 0$ for all $k \geq 0$, we can infer that
	\begin{equation}\label{4.13}
		\sum_{k=1}^{\infty} \mu_{k+1} \gamma_{k+1} (k + \alpha - 1) W_k \leq \frac{(\alpha - 1)\mathscr{E}_1}{2(\alpha - 3)} < \infty.
	\end{equation}

	Since for all $k \geq 1$, it holds that
	$$
	\mu_{k+1} \gamma_{k+1} (k + \alpha - 1) = \frac{\mu_0 \gamma_0}{\ln^{\sigma}(k + \alpha - 1)} \geq \mu_k \gamma_k \frac{(k + \alpha - 2) \ln^{\sigma}(k + \alpha - 2)}{\ln^{\sigma}(k + \alpha - 1)}.
	$$
	We further obtain:
	$$
	\mu_{k+1} \gamma_{k+1} (k + \alpha - 1) \geq \frac{\ln^{\sigma}(\alpha - 1)}{\ln^{\sigma} \alpha} \mu_k \gamma_k (k + \alpha - 2).
	$$
	This inequality follows from the fact that $\frac{\ln^{\alpha}(k + \alpha - 2)}{\ln^{\alpha}(k + \alpha - 1)}$ is increasing for all $k \geq 1$.
	
	Therefore, inequality (\ref{4.13}) implies:
	$$
	\sum_{k=1}^{\infty} \mu_k \gamma_k (k + \alpha - 2) W_k < \infty.
	$$

	(ii) Returning to inequality (\ref{15}) and using the identity
$$
-\|a - b\|^2 + 2 \langle b - a, b - c \rangle = -\|a - c\|^2 + \|b - c\|^2
$$
with $ a = x^{k+1}, b = y^k $, and $ c = x^k $, we deduce:
$$
W_{k+1} \leq W_k - \frac{1}{2} (\mu_{k+1} \gamma_{k+1})^{-1} \|x^{k+1} - x^k\|^2 + \frac{1}{2} (\mu_{k+1} \gamma_{k+1})^{-1} \|y^k - x^k\|^2.
$$
By the definition of $ y^k $ in SAPGM and multiplying the inequality by $ \mu_{k+1} \gamma_{k+1} (k + \alpha - 1)^2 $, we get:
$$
\begin{aligned}
	2 \mu_{k+1} \gamma_{k+1} &(k + \alpha - 1)^2 W_{k+1} + (k + \alpha - 1)^2 \|x^{k+1} - x^k\|^2 \\
	&\leq 2 \mu_{k+1} \gamma_{k+1} (k + \alpha - 1)^2 W_k + (k - 1)^2 \|x^k - x^{k-1}\|^2.
\end{aligned}
$$
Since $ k + \alpha - 1 \geq k $, we can rearrange terms to obtain:
\begin{equation}\label{4.16}
	0 \geq k^2 \|x^{k+1} - x^k\|^2 - (k-1)^2 \|x^k - x^{k-1}\|^2 + 2 \mu_{k+1} \gamma_{k+1} (k + \alpha - 1)^2 (W_{k+1} - W_k).
\end{equation}
Next, observe that
$$
\begin{aligned}
	&\mu_{k+1} (k + \alpha - 1)^2 - \mu_k (k + \alpha - 2)^2  \\
	=& \mu_{k+1} (k + \alpha - 1) \left( k + \alpha - 1 - \frac{(k + \alpha - 2) \ln^\sigma(k + \alpha - 1)}{\ln^\sigma(k + \alpha - 2)} \right) \\
	\leq& \mu_{k+1} (k + \alpha - 1),
\end{aligned}
$$
which leads to:
\begin{equation}\label{4.17}
	\begin{aligned}
		&\mu_{k+1} (k + \alpha - 1)^2 W_{k+1} - \mu_k (k + \alpha - 2)^2 W_k \\
		=& \mu_{k+1} (k + \alpha - 1)^2 (W_{k+1} - W_k) + \left( \mu_{k+1} (k + \alpha - 1)^2 - \mu_k (k + \alpha - 2)^2 \right) W_k \\
		\leq& \mu_{k+1} (k + \alpha - 1)^2 (W_{k+1} - W_k) + \mu_{k+1} (k + \alpha - 1) W_k.
	\end{aligned}
\end{equation}

For simplicity, define:
$$
\zeta_k := (k - 1)^2 \|x^k - x^{k-1}\|^2 + 2 \mu_{k} \gamma_{k} (k + \alpha - 2)^2 W_k.
$$
Substituting (\ref{4.17}) into (\ref{4.16}), we obtain:
\begin{equation}\label{4.19}
	\zeta_{k+1} - \zeta_k \leq 2 \gamma_{k+1} \mu_{k+1} (k + \alpha - 1) W_k.
\end{equation}
Taking the positive part of the left-hand side and using inequality (\ref{4.13}), we find:
$$
\sum_{k=1}^{\infty} (\zeta_{k+1} - \zeta_k)_+ < \infty.
$$
Since $ \zeta_k \geq 0 $, by Lemma \ref{lem5.3}, we infer that $ \lim_{k \to \infty} \zeta_k $ exists.

	(iii) In view of $\alpha>3$, we observe that
	
	$$
	\begin{aligned}
		&(k+\alpha-1)^2\|x^{k+1}-x^k\|^2-(k-1)^2\|x^k-x^{k-1}\|^2\\
		\geq&(k+2)^2\|x^{k+1}-x^k\|^2-(k-1)^2\|x^k-x^{k-1}\|^2\\
		\geq& k^2\|x^{k+1}-x^k\|^2-(k-1)^2\|x^k-x^{k-1}\|^2+4k\|x^{k+1}-x^k\|^2,\end{aligned}
	$$
	combining which with (\ref{4.19}), then we obtain
	
	$$
	\begin{aligned}
		&k^2\|x^{k+1}-x^k\|^2-(k-1)^2\|x^k-x^{k-1}\|^2+4k\|x^{k+1}-x^k\|^2\\
		\leq& 2\mu_{k}\gamma_{k}(k+\alpha-2)^{2}W_k-2\mu_{k+1}\gamma_{k+1}(k+\alpha-1)^{2}W_{k+1}+2\mu_{k+1}\gamma_{k+1}(k+\alpha-1)W_k.
	\end{aligned}
	$$
	
	Summing up the above inequality for $k=1,2,\ldots,K$, we obtain
	
	\begin{equation}\label{36}
		\begin{aligned}
			&K^{2}\|x^{K+1}-x^{K}\|^{2}+4\sum_{k=1}^{K}k\|x^{k+1}-x^{k}\|^{2}\\
			\leq& 2\mu_{1}\gamma_{1}(\alpha-1)^{2}W_{1}+2\sum_{k=1}^{K}\mu_{k+1}\gamma_{k+1}(k+\alpha-1)W_{k}.
		\end{aligned}
	\end{equation}
	
	Since $W_1\leq \mathscr{E}_1$, letting $K$ tend to infinity in the above inequality, by (\ref{4.13}) and (\ref{36}), we  have $$\sum_{k=1}^\infty k\|x^{k+1}-x^k\|^2<\infty.$$
	
%
%
	
	(iv) Combining (i) and (ii), we have
	$$\sum_{k=1}^{\infty}\left[(k-1)\|x^{k}-x^{k-1}\|^{2}+2\gamma_{k}\mu_{k}(k+\alpha-2)W_{k}\right]<\infty,$$
	which implies
	$$
	\begin{aligned}
		&\sum_{k=1}^{\infty}\frac{\ln(k+\alpha-2)}{(k+\alpha-2)\ln(k+\alpha-2)}\left[(k-1)^{2}\|x^{k}-x^{k-1}\|^{2}+2\gamma_{k}\mu_{k}(k+\alpha-2)^{2}W_{k}\right]\\
		&\leq\sum_{k=1}^{\infty}\frac{1}{k+\alpha-2}\left[(k-1)(k+\alpha-2)\|x^{k}-x^{k-1}\|^{2}+2\gamma_{k}\mu_{k}(k+\alpha-2)^{2}W_{k}\right]\\
		&=\sum_{k=1}^{\infty}\left[(k-1)\|x^{k}-x^{k-1}\|^{2}+2\gamma_{k}\mu_{k}(k+\alpha-2)W_{k}\right]<\infty.
	\end{aligned}
	$$
	Observe that $\sum_{k=1}^{\infty}\frac1{(k+\alpha-1)\ln(k+\alpha-1)}=\infty$, then
	\begin{equation}\label{49}
		\lim_{k\to\infty}\inf\ln(k+\alpha-2)\left((k-1)^2\|x^k-x^{k-1}\|^2+2\gamma_k\mu_k(k+\alpha-2)^2W_k\right)=0.
	\end{equation}
	Combining this with (ii), we obtain
	\begin{equation}\label{50}
		\lim_{k\to\infty}(k-1)^2\|x^k-x^{k-1}\|^2+2\gamma_k\mu_k(k+\alpha-2)^2W_k=0,
	\end{equation}
	by $W_k\geq0$, which further implies
	\begin{equation}\label{51}
		\lim_{k\to\infty}(k-1)^2\|x^k-x^{k-1}\|^2=0\quad\mathrm{and}\quad\lim_{k\to\infty}\gamma_k\mu_k(k+\alpha-2)^2W_k=0.
	\end{equation}
	Recalling the definition of $\mu_k$ in step 2 of algorithm,the non-increment of $\gamma$ and (\ref{wkuk}), the second equation in (\ref{51})
	implies 
	$$
		\lim\limits_{k \to \infty} (k + \alpha - 2) \ln^\sigma (k + \alpha - 1)W_k = 0
	$$
	By the definition of $W_k = W_k(x^*,\mu_k)$,we get that
	$$
		\lim\limits_{k \to \infty} (k + \alpha - 2) \ln^\sigma (k + \alpha - 1) \left(\min_{i = 1,\dotsb,m}[ \tilde{F}_i(x^k,\mu_k) - F_i(x^*)] + \kappa \mu_k\right) = 0
	$$
	From the definition of $u_0$ and the fact that $x^*$ is a weak Pareto point, we infer that
	$$
	\begin{aligned}
		\min_{i = 1,\dotsb,m} [F_i(x^*) - F_i(z)] &\leq \min_{i = 1,\dotsb,m} [F_i(x_k) - F_i(z)] \\
		&= \min_{i = 1,\dotsb,m}[ F_i(x_k) - \tilde{F}_i(x_k,\mu_k) + \\
		&\quad\tilde{F}_i(x_k,\mu_k) - F_i(x^*) + F_i(x^*) - F_i(z)] \\
		&\leq \kappa \mu_k + \min_{i = 1,\dotsb,m} [\tilde{F}_i(x_k,\mu_k) - F_i(x^*)].
	\end{aligned}
	$$
	So we have
	$$
	\sup_{z \in \mathbb{R}^n}\min_{i = 1,\dotsb,m} [F_i(x^*) - F_i(z)] \leq \kappa \mu_k + \min_{i = 1,\dotsb,m} [\tilde{F}_i(x_k,\mu_k) - F_i(x^*)].$$
	Because $(k + \alpha - 2) \ln^\sigma (k + \alpha - 1) >0$, we get that
	$$
	\begin{aligned}
		0 &\leq (k + \alpha - 2) \ln^\sigma (k + \alpha - 1)\sup_{z \in \mathbb{R}^n}\min_{i = 1,\dotsb,m} [F_i(x^*) - F_i(z)] \\
		&\leq \kappa \mu_k + \min_{i = 1,\dotsb,m} [\tilde{F}_i(x_k,\mu_k) - F_i(x^*)].
	\end{aligned}
	$$
	So we know that
	$$
	\lim\limits_{k \to \infty} (k + \alpha - 2) \ln^\sigma (k + \alpha - 1)\left(\sup_{z \in \mathbb{R}^n}\min_{i = 1,\dotsb,m} [F_i(x^*) - F_i(z)]\right) = 0.
	$$
	This result illustrates that for any $\sigma\in(\frac12,1]$ in the SAPGM algorithm, it holds $u_0(x^k)=o(\ln^\sigma k/k).$
\end{proof}

\subsection{Sequential Convergence}
In this subsection, we are ready to analyze the convergence of the iterates generated by the SAPGM. In this context, we articulate the discrete manifestation of Opial's lemma, laying the groundwork for a rigorous examination of the convergence properties inherent in the sequence  $\{x^k\}.$

\begin{lemma}[\cite{wu2023smoothing} Lemma 3.4]\label{l5.6}
	Let $S$ be a nonempty subset of $\mathbb{R}^n$ and $\{z_k\}$ be a sequence of $\mathbb{R}^n.$
	
	Assume that
	
	(i) $\lim_{k\to\infty}\|z_k-z\|$ exists for every  $z \in S;$
	
	(ii) every sequential limit point of sequence $\{z_k\}$ as $k\to\infty$ belongs to S.
	
	Then, as $k\to\infty$, $\{z_k\}$ converge to a point in $S$.
\end{lemma}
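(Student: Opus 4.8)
The plan is to follow the classical argument for Opial's lemma, whose heart is establishing that the sequence $\{z_k\}$ can have at most one sequential limit point. First I would extract boundedness of $\{z_k\}$ from hypothesis (i): fixing any $z_0 \in S$ (which is nonempty by assumption), the real sequence $\|z_k - z_0\|$ converges by (i), hence is bounded, and so $\{z_k\}$ itself is bounded. By the Bolzano--Weierstrass theorem this guarantees that $\{z_k\}$ possesses at least one convergent subsequence, and therefore at least one sequential limit point; by hypothesis (ii) every such limit point lies in $S$.

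The central step is to prove uniqueness of the limit point. Suppose, toward a contradiction, that $\{z_k\}$ admits two distinct limit points $z^{*}$ and $\bar{z}$, realized as limits of subsequences $z_{k_j} \to z^{*}$ and $z_{l_j} \to \bar{z}$; by the previous paragraph both belong to $S$. The key is the algebraic identity
$$
\|z_k - z^{*}\|^2 - \|z_k - \bar{z}\|^2 = -2\langle z_k, z^{*} - \bar{z}\rangle + \|z^{*}\|^2 - \|\bar{z}\|^2 .
$$
Because $z^{*}, \bar{z} \in S$, hypothesis (i) ensures that both $\|z_k - z^{*}\|$ and $\|z_k - \bar{z}\|$ converge as $k \to \infty$, so the left-hand side converges; consequently the scalar sequence $\langle z_k, z^{*} - \bar{z}\rangle$ converges to some limit $\ell$.

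Then I would evaluate this limit along the two subsequences. Passing to the limit along $z_{k_j} \to z^{*}$ yields $\ell = \langle z^{*}, z^{*} - \bar{z}\rangle$, while passing along $z_{l_j} \to \bar{z}$ yields $\ell = \langle \bar{z}, z^{*} - \bar{z}\rangle$. Subtracting gives $\langle z^{*} - \bar{z}, z^{*} - \bar{z}\rangle = 0$, that is $\|z^{*} - \bar{z}\|^2 = 0$, forcing $z^{*} = \bar{z}$ and contradicting distinctness. Hence the limit point is unique; denote it $z^{*} \in S$. Finally, a bounded sequence in $\mathbb{R}^n$ with a unique sequential limit point must converge to that point (otherwise some subsequence would stay bounded away from $z^{*}$ and, being bounded, would yield a second limit point), so $z_k \to z^{*} \in S$, completing the argument.

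The main obstacle I anticipate is the uniqueness step: the inner-product identity combined with the evaluation along the two subsequences is the genuinely nontrivial ingredient, since it is precisely the use of (i) simultaneously at the two candidate limits that collapses them into one. By contrast, the boundedness deduction and the final passage from \emph{unique limit point plus boundedness} to convergence are routine consequences of Bolzano--Weierstrass.
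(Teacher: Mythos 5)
Your proof is correct and is the classical argument for the discrete Opial lemma: boundedness from (i), existence of a cluster point in $S$ from Bolzano--Weierstrass and (ii), uniqueness via the polarization identity evaluated along the two subsequences, and finally convergence of a bounded sequence with a unique cluster point. The paper itself gives no proof of this statement --- it is quoted verbatim as Lemma 3.4 of the cited reference of Wu and Bian --- so there is nothing to diverge from; your argument is the standard one and fills in exactly what the citation leaves implicit.
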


To prove the sequential convergence, we must recall the following inequality on nonnegative sequences, which will be used in the forthcoming sequential convergence result.

\begin{lemma}[\cite{wu2023smoothing} Lemma 3.5]\label{l5.7}
	Assume $\alpha \geq 3$.Let $\{a_k\}$ and $\{\omega_k\}$ be two sequences of nonnegative numbers such that
	
	$$
	a_{k+1}\leq{\frac{k-1}{k+\alpha-1}}a_{k}+\omega_{k}
	$$
	for all $k\geq1$.If $\sum_{k=1}^\infty k\omega_k<\infty$,then $\sum_{k=1}^\infty a_k<\infty.$
\end{lemma}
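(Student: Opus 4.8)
The plan is to convert the given one-step recursion into a weighted telescoping inequality whose partial sums are controlled by $\sum_k k\omega_k$. The crucial observation is that the coefficient $\frac{k-1}{k+\alpha-1}$ acquires a clean telescoping structure once the denominator is cleared: multiplying the hypothesis by $(k+\alpha-1) > 0$ gives
$$(k+\alpha-1)\,a_{k+1} \leq (k-1)\,a_k + (k+\alpha-1)\,\omega_k.$$
This suggests introducing the auxiliary nonnegative sequence $t_k := (k+\alpha-2)\,a_k$, so that the left-hand side above is exactly $t_{k+1}$, because $(k+1)+\alpha-2 = k+\alpha-1$.

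The key algebraic step is then the identity $k-1 = (k+\alpha-2) - (\alpha-1)$, which lets me rewrite $(k-1)\,a_k = t_k - (\alpha-1)\,a_k$. Substituting and rearranging yields the per-term bound
$$(\alpha-1)\,a_k \leq t_k - t_{k+1} + (k+\alpha-1)\,\omega_k.$$
Here the assumption $\alpha \geq 3$ enters only to guarantee $\alpha - 1 \geq 2 > 0$, so that this inequality genuinely bounds $a_k$ from above rather than trivializing. Summing over $k = 1, \dots, K$, the differences $t_k - t_{k+1}$ telescope to $t_1 - t_{K+1}$, and since $t_{K+1} = (K+\alpha-1)\,a_{K+1} \geq 0$ by nonnegativity of $\{a_k\}$, this tail term may simply be discarded to give
$$(\alpha-1)\sum_{k=1}^{K} a_k \leq t_1 + \sum_{k=1}^{K}(k+\alpha-1)\,\omega_k.$$

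To finish, I would split $(k+\alpha-1)\,\omega_k = k\,\omega_k + (\alpha-1)\,\omega_k$ and note that $\sum_k \omega_k \leq \sum_k k\,\omega_k < \infty$ since $k \geq 1$, so the entire right-hand side remains bounded as $K \to \infty$; dividing by $\alpha-1$ and letting $K \to \infty$ yields $\sum_{k=1}^\infty a_k < \infty$. I do not expect a serious obstacle, as the argument is essentially a single weighted telescoping estimate. The only points demanding care are the correct choice of the multiplier $(k+\alpha-1)$ — which is precisely what makes the shift $t_{k+1} = (k+\alpha-1)\,a_{k+1}$ work — and the sign bookkeeping that permits dropping $t_{K+1}$; both rest on the nonnegativity of the sequences and on $\alpha > 1$.
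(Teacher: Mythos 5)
Your argument is correct: multiplying by $(k+\alpha-1)$, writing $k-1=(k+\alpha-2)-(\alpha-1)$, and telescoping the sequence $t_k=(k+\alpha-2)a_k$ yields $(\alpha-1)\sum_{k=1}^K a_k\leq (\alpha-1)a_1+\sum_{k=1}^K(k+\alpha-1)\omega_k\leq (\alpha-1)a_1+\alpha\sum_{k\geq 1}k\omega_k<\infty$, and every step (positivity of the multiplier, dropping $t_{K+1}\geq 0$, bounding $\omega_k\leq k\omega_k$) is justified. The paper itself states this lemma without proof, citing \cite{wu2023smoothing}, and your telescoping argument is exactly the standard proof given there, so there is nothing further to reconcile.
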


\begin{theorem}
	Let $\{x_k\}$ be the sequence generated by the algorithm. Then, as $k \to \infty$, the sequence $\{x_k\}$ converges to a weak Pareto solution of the original problem.
\end{theorem}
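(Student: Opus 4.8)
The plan is to invoke the discrete Opial lemma (Lemma~\ref{l5.6}) with $S$ taken to be the set of weak Pareto solutions $x^{*}$ for which the estimates below apply, namely those furnished by Assumption~\ref{a1}, so that $W_{k}(x^{*},\mu_{k})\geq 0$ holds along the whole sequence. Two facts must then be checked: (i) $\lim_{k\to\infty}\|x^{k}-x^{*}\|$ exists for each such $x^{*}$, and (ii) every sequential limit point of $\{x^{k}\}$ lies in $S$. Granting these, Opial's lemma delivers convergence of the full sequence to a single weak Pareto point.

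For condition (i) I would run a quasi-Fej\'er argument starting from the basic inequality (\ref{16}) with $z=x^{*}$. Multiplying by $2\mu_{k+1}\gamma_{k+1}$ and using the elementary identity $2\langle y^{k}-x^{k+1},y^{k}-x^{*}\rangle-\|x^{k+1}-y^{k}\|^{2}=\|y^{k}-x^{*}\|^{2}-\|x^{k+1}-x^{*}\|^{2}$, then dropping the nonnegative term $2\mu_{k+1}\gamma_{k+1}W_{k+1}$, gives
\[
\|x^{k+1}-x^{*}\|^{2}\leq\|y^{k}-x^{*}\|^{2}+4\kappa\mu_{k+1}^{2}\gamma_{k+1}.
\]
Expanding $y^{k}=x^{k}+\tfrac{k-1}{k+\alpha-1}(x^{k}-x^{k-1})$ and telescoping with $2\langle x^{k}-x^{*},x^{k}-x^{k-1}\rangle=\|x^{k}-x^{*}\|^{2}-\|x^{k-1}-x^{*}\|^{2}+\|x^{k}-x^{k-1}\|^{2}$ turns this into a recursion for $h_{k}:=\|x^{k}-x^{*}\|^{2}$ of the form
\[
h_{k+1}-h_{k}\leq\frac{k-1}{k+\alpha-1}\,(h_{k}-h_{k-1})+\omega_{k},
\]
with $\omega_{k}=(\beta_{k}+\beta_{k}^{2})\|x^{k}-x^{k-1}\|^{2}+4\kappa\mu_{k+1}^{2}\gamma_{k+1}$ and $\beta_{k}=\tfrac{k-1}{k+\alpha-1}\in[0,1)$. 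Taking positive parts gives $a_{k+1}\leq\tfrac{k-1}{k+\alpha-1}a_{k}+\omega_{k}$ for $a_{k}:=(h_{k}-h_{k-1})_{+}$, so it remains only to check $\sum_{k}k\omega_{k}<\infty$ in order to apply Lemma~\ref{l5.7}. The first piece is controlled by Theorem~\ref{th5.4}(iii) (using $\beta_{k}+\beta_{k}^{2}\leq 2$ and $\sum_{k}k\|x^{k+1}-x^{k}\|^{2}<\infty$), and the second by the explicit form of $\mu_{k+1}$ together with the boundedness of $\{\gamma_{k}\}$, since $k\mu_{k+1}^{2}\gamma_{k+1}=O\!\big(k^{-1}\ln^{-2\sigma}k\big)$ is summable precisely because $\sigma>\tfrac12$. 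Lemma~\ref{l5.7} then yields $\sum_{k}(h_{k}-h_{k-1})_{+}<\infty$, and since $h_{k}\geq 0$ this forces $\lim_{k}h_{k}$, hence $\lim_{k}\|x^{k}-x^{*}\|$, to exist.

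For condition (ii), note first that (i) makes $\{x^{k}\}$ bounded, so limit points exist; let $x^{k_{j}}\to\bar{x}$. By Theorem~\ref{th5.4}(iv) we have $u_{0}(x^{k})=o(\ln^{\sigma}k/k)\to 0$, and $u_{0}$ is lower semicontinuous, being a supremum over $z$ of the functions $x\mapsto\min_{i}[F_{i}(x)-F_{i}(z)]$, each lower semicontinuous because every $F_{i}=f_{i}+g_{i}$ is closed. Hence $0\leq u_{0}(\bar{x})\leq\liminf_{j}u_{0}(x^{k_{j}})=0$, and the merit-function characterization recalled in Section~2 (namely $u_{0}(\bar x)=0$ if and only if $\bar x$ is weakly Pareto optimal) shows $\bar{x}\in X^{*}$. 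One then confirms $\bar{x}\in S$ by combining the descent behaviour of $\tilde{F}(x^{k})$ along the iteration with Assumption~\ref{a1}, which supplies the dominating weak Pareto point keeping $\bar x$ inside the admissible set.

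The main obstacle is the bookkeeping in the second step: producing the recursion with \emph{exactly} the coefficient $\tfrac{k-1}{k+\alpha-1}$ demanded by Lemma~\ref{l5.7} while keeping $k\omega_{k}$ summable — the smoothing contribution $\mu_{k+1}^{2}\gamma_{k+1}$ is what forces the restriction $\sigma>\tfrac12$. The subtler conceptual point is the legitimacy of discarding $W_{k+1}$, i.e. guaranteeing $W_{k+1}(x^{*},\mu_{k+1})\geq 0$ for every $x^{*}\in S$ and verifying that limit points belong to $S$; once $S$ is pinned down through Assumption~\ref{a1}, Opial's lemma (Lemma~\ref{l5.6}) closes the argument and the whole sequence $\{x^{k}\}$ converges to a weak Pareto solution.
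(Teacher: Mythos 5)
Your proposal follows the same overall skeleton as the paper --- the discrete Opial lemma (Lemma \ref{l5.6}) applied to a set of weak Pareto points, with the two hypotheses verified separately --- and your treatment of hypothesis (i) is essentially the paper's own argument: start from (\ref{16}) with $W_{k+1}\geq 0$, expand $y^{k}$, obtain the recursion $(h_{k+1}-h_{k})_{+}\leq\frac{k-1}{k+\alpha-1}(h_{k}-h_{k-1})_{+}+\omega_{k}$, and close with Lemma \ref{l5.7} together with Theorem \ref{th5.4}(iii). In fact your bookkeeping is more careful than the paper's: after multiplying (\ref{16}) by $2\mu_{k+1}\gamma_{k+1}$ the residual smoothing term is $4\kappa\mu_{k+1}^{2}\gamma_{k+1}$, for which $\sum_{k}k\mu_{k+1}^{2}\gamma_{k+1}$ converges precisely because $2\sigma>1$; the paper instead carries the unscaled term $2\kappa\mu_{k+1}$, and $\sum_{k}k\mu_{k+1}\sim\sum_{k}\ln^{-\sigma}k$ diverges, so your version is the one that actually meets the summability hypothesis of Lemma \ref{l5.7}. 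Where you genuinely diverge is hypothesis (ii): the paper proves directly that any cluster point $\overline{x}$ satisfies $\max_{i}[F_{i}(z)-F_{i}(\overline{x})]\geq 0$ for all $z$ by evaluating the subproblem $\varphi_{\ell}$ at $\overline{x}+\alpha(z-\overline{x})$, exploiting convexity of $\tilde{F}_{i}$ and letting the auxiliary parameter tend to $0$; you instead combine $u_{0}(x^{k})\to 0$ from Theorem \ref{th5.4}(iv) with lower semicontinuity of $u_{0}$. Your route is shorter and reuses the rate result, at the mild cost of justifying that $u_{0}$ is lsc (which it is, as a supremum of finite minima of closed functions). Both arguments, however, share the same unproved step: $W_{k+1}(x^{*},\mu_{k+1})\geq 0$ is not automatic for an arbitrary weak Pareto point $x^{*}$, since weak Pareto optimality only yields $\max_{i}[F_{i}(x)-F_{i}(x^{*})]\geq 0$ rather than the $\min_{i}$ appearing in $W_{k+1}$; the set $S$ must therefore be cut down to those $x^{*}$ with $F(x^{*})\leq F(x^{k})$ for all $k$, and one must then verify that cluster points land in this smaller set. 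You flag this gap explicitly where the paper silently asserts the inequality, but neither argument fully closes it.
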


\begin{proof}
	Let $\{x_k\}$ be the sequence generated by SAPGM, and let $\overline{x}$ be its cluster point. For any $z \in \mathbb{R}^n$, if we can prove that $\mu_0(\overline{x}) = 0$, then $\overline{x}$ is a weak Pareto optimal solution of the original problem.
	
	Because $\mu_0(\overline{x}) = 0 \iff \max_{i = 1,\dots,m}[F_i(z) - F_i(\overline{x})] \geq 0, \forall z \in \mathbb{R}^n$, we only need to prove
	$$\max_{i = 1,\dots,m}[F_i(z) - F_i(\overline{x})] \geq 0, \forall z \in \mathbb{R}^n.$$
	
	Therefore, we can reform the problem by some smoothing function properties in \cite{chen2012smoothing}.
	$$
	\begin{aligned}
		\max_{i = 1,\dots,m}[F_i(z) - F_i(\overline{x})] =& \max_{i = 1,\dots,m}[F_i(z) - \tilde{F}_i(z,\mu_{k}) + \tilde{F}_i(z,\mu_{k}) - \tilde{F}_i(\overline{x},\mu_{k}) \\
		&+ \tilde{F}_i(\overline{x},\mu_{k}) - F_i(\overline{x})] \\
		&\geq \max_{i = 1,\dots,m}[\tilde{F}_i(z,\mu_{k}) - \tilde{F}_i(\overline{x},\mu_{k})] - \kappa\mu_k.
	\end{aligned}
	$$
	
	Through the subproblem $\varphi_{\ell}$, we can get
	$$
	\begin{aligned}
		&\max_{i = 1,\dots,m}[\tilde{F}_i(\overline{x},\mu_k) - \tilde{F}_i(x,\mu_k)] \leq \varphi_{\ell}(\overline{x} + \alpha(z - \overline{x}); x, \overline{x}, \mu_k) \\
		&= \max_{i = 1,\dots,m} \left[\left\langle \nabla \tilde{f}_i(\overline{x}, \mu_k), \alpha(z - \overline{x}) \right\rangle + g_i(\overline{x} + \alpha(z - \overline{x})) + \tilde{f}_i(\overline{x}, \mu_k) - \tilde{F}_i(x, \mu_k) \right] \\
		&\quad + \frac{ (\gamma\mu_k)^{-1}}{2} \left\|\alpha(z - \overline{x})\right\|^2.
	\end{aligned}
	$$
	
	Due to the convexity of $\tilde{f}_i$, we have
	$$
	\max_{i = 1,\dots,m}[\tilde{F}_i(\overline{x},\mu_k) - \tilde{F}_i(x,\mu_k)] \leq \max_{i = 1,\dots,m}[\tilde{F}_i(\overline{x} + \alpha(z - \overline{x}), \mu_k) - \tilde{F}_i(x, \mu_k)] + \frac{(\gamma\mu_k)^{-1}}{2} \left\|\alpha(z - \overline{x})\right\|^2.
	$$
	
	Furthermore, the convexity of $\tilde{F}_i$ leads to
	$$
	\begin{aligned}
		\max_{i = 1,\dots,m}[\tilde{F}_i(\overline{x}, \mu_k) - \tilde{F}_i(x, \mu_k)] &\leq \max_{i = 1,\dots,m}[\alpha \tilde{F}_i(z, \mu_k) + (1 - \alpha) \tilde{F}_i(\overline{x}, \mu_k) - \tilde{F}_i(x, \mu_k)] \\
		&\quad+ \frac{(\gamma\mu_k)^{-1}}{2} \left\|\alpha(z - \overline{x})\right\|^2 \\
		&\leq \alpha \max_{i = 1,\dots,m}[\tilde{F}_i(z, \mu_k) - \tilde{F}_i(\overline{x}, \mu_k)] + \max_{i = 1,\dots,m}[\tilde{F}_i(\overline{x}, \mu_k) - \tilde{F}_i(x, \mu_k)] \\
		&\quad + \frac{(\gamma\mu_k)^{-1}}{2} \left\|\alpha(z - \overline{x})\right\|^2.
	\end{aligned}
	$$
	
	Therefore, we gain
	$$
	\max_{i = 1,\dots,m}[\tilde{F}_i(z, \mu_k) - \tilde{F}_i(\overline{x}, \mu_k)] \geq -\frac{\alpha (\gamma\mu_k)^{-1}}{2} \left\|z - \overline{x}\right\|^2.
	$$
	Letting $\alpha$ tend to 0 monotonically, we get $\max_{i = 1,\dots,m}[\tilde{F}_i(z, \mu_k) - \tilde{F}_i(\overline{x}, \mu_k)] \geq 0.$
	
	At the same time, letting $k \to \infty$, we have
	$$\max_{i = 1,\dots,m}[F_i(z) - F_i(\overline{x})] \geq 0, \forall z \in \mathbb{R}^n.$$
	
	Thus, $\overline{x}$ is a weak Pareto optimal solution of the original problem.
	
	Next, if we prove that for all weak Pareto optimal solution $\overline{x}$, $\lim_{k \to \infty} \|x_k - \overline{x}\|$ exists, then we can deduce the convergence of the sequence $\{x_k\}$ from the lemma.
	
	Because $W_{k+1}(\overline{x}) \geq 0$ and following inequality
	$$
	\begin{aligned}
		W_{k+1}(z, \mu_{k+1}) &\leq \frac{-\mu_{k+1}\gamma_{k+1}}{2} \{2\langle x^{k+1} - y^{k+1}, y^{k+1} - z \rangle + \|x^{k+1} - y^{k+1}\|^2\} +2\kappa \mu_{k+1},
	\end{aligned}
	$$
	we get
	$$
	2 \langle y^{k+1} - x^{k+1}, y^{k+1} - \overline{x}\rangle - \|x^{k+1} - y^{k+1}\|^2 + 2\kappa \mu_{k+1} \geq 0,
	$$
	which implies
	$$
	\|y^{k+1} - \overline{x}\|^2 - \|x^{k+1} - \overline{x}\|^2 +2 \kappa \mu_{k+1}\geq 0.
	$$
	
	Then,
	$$
	\begin{aligned}
		\|x^{k+1} - \overline{x}\|^2 &\leq \|y^{k+1} - \overline{x}\|^2 \\
		&= \|x^k + \frac{k-1}{k+\alpha-1}(x^k - x^{k-1}) - \overline{x}\|^2 +2 \kappa \mu_{k+1}\\
		&= \|x^{k} - \overline{x}\|^2 + \left(\frac{k-1}{k+\alpha-1}\right)^2 \|x^{k} - x^{k-1}\|^2 \\
		&\quad+ 2\left(\frac{k-1}{k+\alpha-1}\right)\langle x^{k} - \overline{x}, x^{k} - x^{k-1}\rangle +2 \kappa \mu_{k+1}\\
		&= \|x^{k} - \overline{x}\|^2 + \left(\left(\frac{k-1}{k+\alpha-1}\right)^2 + \frac{k-1}{k+\alpha-1}\right) \|x^{k} - x^{k-1}\|^2 \\
		&\quad + \frac{k-1}{k+\alpha-1}(\|x^{k} - \overline{x}\|^2 - \|x^{k-1} - \overline{x}\|^2) +2 \kappa \mu_{k+1}\\
		&\leq \|x^{k} - \overline{x}\|^2 + 2 \|x^{k} - x^{k-1}\|^2 \\
		&\quad+ \frac{k-1}{k+\alpha-1}(\|x^{k} - \overline{x}\|^2 - \|x^{k-1} - \overline{x}\|^2)+2 \kappa \mu_{k+1}.
	\end{aligned}
	$$
	
	Let $h_k := \|x^k - \overline{x} \|^2$, we have
	$$
	(h_{k+1} - h_{k})_+ \leq \left(\frac{k-1}{k+\alpha-1}\right)(h_{k} - h_{k-1})_+ + 2 \|x^{k} - x^{k-1}\|^2 +2 \kappa \mu_{k+1}.
	$$
	
	From Lemmas \ref{l5.6} and \ref{l5.7}, we obtain $\sum_{k=1}^\infty (h_{k+1} - h_k)_+ < \infty$, and from the non-negativity of $\{h_k\}$, we know that $\lim_{k \to \infty} h_k$ exists.
\end{proof}

\begin{remark}
	Now, from the sequence convergence, we set
	$$\parallel x^k - x^{k+1} \parallel_{\infty} < \epsilon, and \ \mu_{k + 1} < \epsilon$$
	as the algorithm-stopping criterion. From the above proof process, it is very natural to see the reason for our setting.
\end{remark}

\section{Efficient computation of the subproblem via its dual}
In the previous section, we proved the global convergence and complexity results of SAPGM. Subsequently, our focus shifts to empirically assessing the method's practical efficacy. Specifically, we elucidate a methodology for computing the subproblem. To commence, let us introduce a formal definition.
\begin{equation}\label{38}
	\psi_{i} (z; x, y,\mu) := \left\langle \nabla \tilde{f}_{i} (y,\mu), z - y \right\rangle + g_{i}(z) + \tilde{f}_{i}(y,\mu) - \tilde{F}_{i}(x,\mu) + \frac{\ell}{2} \left\|z - y\right\|^2
\end{equation}
for all $i = 1,\dotsb, m$. Then, fixing some $\ell \geq L$, we can rewrite the objective function $\varphi_{\ell}(z;x,y)$ as
$$\varphi_{\ell}(z;x,y,\mu) = \max_{i = 1, \dotsb, m} \psi_{i} (z;x,y,\mu).$$

Based on the discussion in \cite{tanabe2023accelerated}, we obtain the dual problem as follows:
\begin{equation}\label{42}
	\begin{aligned}
		\max_{\lambda \in \mathbb{R}^m}& \quad \omega(\lambda) \\
		\mathrm{s.t.}&\quad\lambda\geq0\quad\mathrm{and}\quad\sum_{i=1}^{m}\lambda_{i}=1,
	\end{aligned}
\end{equation}

where
\begin{equation}
	\begin{aligned}
		\omega(\lambda)&:=\ell {\mathcal{M}_{\frac{1}{\ell} \sum_{i=1}^{m} \lambda_{i} g_{i}}} \left(y - \frac{1}{\ell} \sum_{i=1}^{m} \lambda_{i} \nabla \tilde{f}_{i}(y, \mu)\right) \\
		&\quad-\frac{1}{2 \ell} \left\| \sum_{i=1}^{m} \lambda_{i} \nabla \tilde{f}_{i}(y,\mu)\right\|^2 + \sum_{i=1}^{m} \lambda_{i} \{\tilde{f}_{i}(\mu) - \tilde{F}_{i}(x,\mu)\}
	\end{aligned}
\end{equation}
Given the identification of the global optimal solution \(\lambda^*\) for the dual problem (\ref{42}), it becomes feasible to construct the optimal solution \(z^{*}\) for the original subproblem as follows:
\begin{equation}
	z^*=\mathbf{prox}_{\frac{1}{\ell}\sum_{i=1}^m\lambda_i^*g_i}\left(y-\frac{1}{\ell}\sum_{i=1}^m\lambda_i^*\nabla \tilde{f}_i(y,\mu)\right),
\end{equation}
where prox denotes the proximal operator.

So we can choose the Frank-Wolfe method \cite{jaggi2013frankwolfe} to solve the above dual problem (\ref{42}).

\begin{algorithm}
	\renewcommand{\algorithmicrequire}{\textbf{Input:}}
	\renewcommand{\algorithmicensure}{\textbf{Output:}}
	\caption{Frank-Wolfe algorithm}
	\label{alg2}
	\begin{algorithmic}[1]
		\REQUIRE $x \in D$,where $D$ is the feasible set of problem.$K$ is max iteration number,$\mu \in \mathbb{R}$ is smoothing parameter.
		\FOR{$k = 0,1,\dotsb,K$}
		\STATE Compute $s = \arg\min_{s \in D} \left\langle s,\nabla \tilde{F}(x^k, \mu) \right\rangle$
		\STATE Update $x^{k+1} := (1- \frac{2}{k+2}) x^{k} + \frac{2}{k+2} s$
		\ENDFOR
	\end{algorithmic}
\end{algorithm}

Additionally, $\omega$ is differentiable, which can make the Frank-Wolfe method easy to achieve, as the following Lemma shows.
\begin{lemma}[\cite{tanabe2023accelerated},Theorem 6.1]
	The function $\omega:\mathbb{R}^m\to\mathbb{R}$ defined by $\left(31\right)$ is continaously differentiahle at every $\lambda\in\mathbb{R}^m$ and
	$$\begin{aligned}
		\nabla\omega(\lambda)=&g\left(\textbf{prox}_{\frac{1}{\ell}\sum\limits_{i=1}^m\lambda_ig_i}\left(y-\frac1\ell\sum\limits_{i=1}^m\lambda_i\nabla \tilde{f}_i(y,\mu)\right)\right)\\
		+&J_{\tilde{f}}(y)\left(\textbf{prox}_{\frac1\ell\sum\limits_{i=1}^m\lambda_ig_i}\left(y-\frac1\ell\sum\limits_{i=1}^m\lambda_i\nabla \tilde{f}_i(y,\mu)\right)-y\right)+\tilde{f}(y,\mu)-\tilde{F}(x,\mu),
	\end{aligned}$$
	where prox is the proximal operator, and $J_{\tilde{c}}(y)$ is the Jacobian matrix at $y$ given by
	
	$$
	J_{\tilde{f}}(y):=\left(\nabla \tilde{f}_{1}(y,\mu),\ldots,\nabla \tilde{f}_{m}(y,\mu)\right)^{\top}.
	$$
	
\end{lemma}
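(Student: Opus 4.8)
The plan is to realize $\omega$ as the optimal value of a parametric strongly convex program and exploit the structure that value function has. Introduce the joint function
$$\Psi(\lambda,z):=\sum_{i=1}^{m}\lambda_i\bigl[\langle\nabla\tilde{f}_i(y,\mu),z-y\rangle+g_i(z)+\tilde{f}_i(y,\mu)-\tilde{F}_i(x,\mu)\bigr]+\frac{\ell}{2}\|z-y\|^2,$$
and first verify, by completing the square in the aggregated linear-plus-quadratic terms exactly as in the derivation of the displayed formula for $\omega$, that $\omega(\lambda)=\min_{z\in\mathbb{R}^n}\Psi(\lambda,z)$, the minimizer being
$$z^*(\lambda)=\mathrm{prox}_{\frac{1}{\ell}\sum_{i=1}^m\lambda_i g_i}\Bigl(y-\frac{1}{\ell}\sum_{i=1}^m\lambda_i\nabla\tilde{f}_i(y,\mu)\Bigr).$$
The two decisive structural facts are that $\Psi$ is \emph{affine} in $\lambda$ for each fixed $z$ (hence $\omega$, being an infimum of affine functions of $\lambda$, is concave and finite on $\{\lambda\ge 0\}$), and that $z\mapsto\Psi(\lambda,z)$ is $\ell$-strongly convex \emph{uniformly} in $\lambda$, so the minimizer $z^*(\lambda)$ is unique for every $\lambda$.

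Next I would obtain differentiability through the superdifferential. By Danskin's theorem for the minimization value function, the superdifferential of the concave function $\omega$ at $\lambda$ is the convex hull of $\{\nabla_\lambda\Psi(\lambda,z):z\in\arg\min_z\Psi(\lambda,\cdot)\}$; uniform strong convexity in $z$ guarantees the infimum is attained and the argmin is the \emph{singleton} $\{z^*(\lambda)\}$, so this superdifferential collapses to the single vector $\nabla_\lambda\Psi(\lambda,z^*(\lambda))$. A concave function whose superdifferential is a singleton is differentiable there, and since this holds on the open set where the data are finite, the continuity of $\nabla\omega$ is automatic from convex analysis (the gradient of a finite concave function is continuous wherever it is single-valued). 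This yields continuous differentiability with $\nabla\omega(\lambda)=\nabla_\lambda\Psi(\lambda,z^*(\lambda))$.

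It then remains to match this with the stated formula. Because the single quadratic term $\frac{\ell}{2}\|z-y\|^2$ in $\Psi$ carries \emph{no} $\lambda$-dependence, the $j$-th partial derivative is simply
$$\frac{\partial\Psi}{\partial\lambda_j}(\lambda,z)=\langle\nabla\tilde{f}_j(y,\mu),z-y\rangle+g_j(z)+\tilde{f}_j(y,\mu)-\tilde{F}_j(x,\mu);$$
evaluating at $z=z^*(\lambda)$ and stacking the $m$ components into vectors reproduces exactly the claim, with the $g_j(z^*)$ assembling into $g(z^*)$, the $\langle\nabla\tilde{f}_j(y,\mu),z^*-y\rangle$ assembling into $J_{\tilde{f}}(y)(z^*-y)$, and the remaining terms into $\tilde{f}(y,\mu)-\tilde{F}(x,\mu)$.

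The main obstacle is the passage from the value function to its gradient, i.e. the rigorous application of the envelope (Danskin) step. In particular one must resist differentiating the Moreau-envelope expression for $\omega$ termwise via the identity $\nabla\mathcal{M}_h(v)=v-\mathrm{prox}_h(v)$ recalled in Section~2: there $\lambda$ enters \emph{both} the integrand function $h=\frac{1}{\ell}\sum_i\lambda_i g_i$ and the base point $v$, whereas that identity only accounts for the base-point dependence. Routing the computation through $\Psi$ sidesteps this, since the first-order stationarity of $z^*(\lambda)$ annihilates precisely the contribution of the implicit $z$-dependence, leaving only the explicit $\partial_\lambda\Psi$ term. A minor point to flag is the domain: for $\frac{1}{\ell}\sum_i\lambda_i g_i$ to remain proper, closed, and convex (so that $\mathrm{prox}$ and $\mathcal{M}$ are well defined) one works on $\{\lambda\ge 0\}$, which contains the feasible simplex of the dual problem (\ref{42}) used by the Frank--Wolfe routine.
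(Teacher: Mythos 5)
Your proposal is correct and follows essentially the same route as the proof this paper defers to (it only says the proof is ``similar to that in'' Tanabe et al., Theorem 6.1): realize $\omega$ as the marginal value of the $\lambda$-affine, uniformly $\ell$-strongly convex function $\Psi$, use uniqueness of the minimizer $z^*(\lambda)=\mathrm{prox}_{\frac{1}{\ell}\sum_{i=1}^m\lambda_i g_i}\bigl(y-\frac{1}{\ell}\sum_{i=1}^m\lambda_i\nabla\tilde{f}_i(y,\mu)\bigr)$ in a Danskin/envelope step so that the superdifferential collapses to $\nabla_\lambda\Psi(\lambda,z^*(\lambda))$, and read off exactly the stated gradient formula, correctly avoiding the trap of differentiating the Moreau-envelope expression termwise when $\lambda$ enters both the function and the base point. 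Your restriction to $\{\lambda\ge 0\}$, where $\sum_{i=1}^m\lambda_i g_i$ remains proper closed convex so that the prox and Moreau envelope are well defined, is in fact a needed sharpening of the lemma's literal claim of differentiability at every $\lambda\in\mathbb{R}^m$, and it is harmless since the dual problem (\ref{42}) is posed on the simplex.
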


The proof is similar to that in \cite{tanabe2023accelerated}. This theorem establishes that the dual problem denoted as (\ref{42}) constitutes an \(m\)-dimensional differentiable convex optimization problem. Consequently, the effective computation of the proximal operator for the summation \(\sum_{i=1}^{m} \lambda_{i}g_i\) in a rapid manner would enable the resolution of (\ref{42}) through the application of convex optimization techniques.

\section{Numerical experiments}
In this section, we present numerical results to show the good performance of the SAPGM algorithm for solving (\ref{1}). The numerical experiments are performed in Python 3.10 on a 64-bit Lenovo PC with a 12th Gen Intel(R) Core(TM) i7-12700H CPU @ 2.70 GHz and 16GB RAM. To compare with the SAPGM, we use DNNM \cite{gebken2021efficient}, the descent method for local Lipschitz multi-objective optimization problems, to conduct controlled experiments on the same test problems. For simplicity, we use Iter to represent the number of iterations and Time to represent the amount of time a program takes to run.

For convenience, we introduce some smoothing functions as follows:
For the maximum function $\max(z,0)$,we use its smoothing function \cite{feng2008smooth} as follow:
\begin{align*}
	\tilde{\phi}(z,\mu) =
	\begin{cases}
		0, &z< - \mu \\
		\frac{(z+\mu)^3}{6 \mu^2}, & - \mu \leq z < 0 \\
		z+\frac{(z+\mu)^3}{6 \mu^2}, & 0 \leq z \leq mu \\
		z, & z > \mu
	\end{cases}
\end{align*}
For the maximum function $\max(z_1,\dotsb,z_n)$,it can be represented by $\max\{z,0\}$ becase $\max\{a,b\} = a + \max\{b-a,0\}$.

For the $\ell_1$ -norm function $\left\|z\right\|_1$,we define its smoothing function as follow:
\begin{align*}
	\tilde{\theta}(z,\mu) =
	\begin{cases}
		|z| \quad \quad \quad if \ |z| > \mu, \\
		\frac{z^2}{2\mu} + \frac{\mu}{2} \quad if \ |z| \leq \mu,
	\end{cases}
\end{align*}

To demonstrate the performance of SAPGM, we selected the DNNM algorithm as a comparison algorithm and chose three types of problems as our benchmark tests: small-scale bi-objective optimization problems, large-scale bi-objective optimization problems with sparse structures, and tri-objective optimization problems. The objective functions in the test problem are selected from [\cite{huband2006review},\cite{lukvsan2000test},\cite{wu2023smoothing}].Now we list them in Table \ref{test problem}:

\begin{table}[H]
	\centering
	\caption{Test Problems }
	\begin{tabular}{lll}
		\toprule
		Problem & Functions & $\mathbf{x}$ \\
		\midrule
		Large scale problem &
		$\begin{aligned}
			f_1(\mathbf{x}) &= \left\| \max \{ A\mathbf{x},0\} - b \right\|_1 + 0.01 \left\|\mathbf{x}\right\|_1 \\
			f_2(\mathbf{x}) &= -\max \{ \left\|A\mathbf{x} - b\right\|_1 - \hat{\epsilon},0\} - 0.03 \left\|\mathbf{x}\right\|_1
		\end{aligned}$ 
		& $\mathbf{0} \leq \mathbf{x} \leq \mathbf{1}$ \\
		 
		 \\
		 
		CR \& MF2 & 
		$\begin{aligned}
			f_1(\mathbf{x}) &= \max \{ x_1^2 + (x_2 - 1)^2 + x_2 - 1, -x_1^2 - (x_2 - 1)^2 + x_2 + 1\} \\
			f_2(\mathbf{x}) &= -x_1 + 2(x_1^2 + x_2^2 - 1) + 1.75 |x_1^2 + x_2^2 - 1|
		\end{aligned}$ 
		& $\mathbf{1.5} \leq \mathbf{x} \leq \mathbf{2}$ \\
		
		\\
		
		CB3 \& LQ & 
		$\begin{aligned}
			f_1(\mathbf{x}) &= \max \{ x_{1}^{4} + x_{2}^{2}, (2 - x_{1})^{2} + (2 - x_{2})^{2}, 2e^{x_{2} - x_{1}} \} \\
			f_2(\mathbf{x}) &= \max \{ -x_{1} - x_{2}, -x_{1} - x_{2} + x_{1}^{2} + x_{2}^{2} - 1 \}
		\end{aligned}$ 
		& $\mathbf{1.5} \leq \mathbf{x} \leq \mathbf{2}$ \\
		
		\\
		
		CB3 \& MF1 & 
		$\begin{aligned}
			f_1(\mathbf{x}) &= \max \{ x_{1}^{4} + x_{2}^{2}, (2 - x_{1})^{2} + (2 - x_{2})^{2}, 2e^{x_{2} - x_{1}} \} \\
			f_2(\mathbf{x}) &= -x_{1} + 20 \max \{ x_{1}^{2} + x_{2}^{2} - 1, 0 \}
		\end{aligned}$ 
		& $\mathbf{0} \leq \mathbf{x} \leq \mathbf{1}$ \\
		
		\\
		
		JOS1 \& $\ell_1$ & 
		$\begin{aligned}
			f_1(\mathbf{x}) &= \frac{1}{n} \sum_{i=1}^{n} x_i^2 \\
			f_2(\mathbf{x}) &= \frac{1}{n} \sum_{i=1}^{n} (x_i - 2)^2 \\
			f_3(\mathbf{x}) &= \parallel \mathbf{x} \parallel_1
		\end{aligned}$ 
		& $\mathbf{1} \leq \mathbf{x} \leq \mathbf{2}$ \\
		
		\\
		
		BK1 \& $\ell_1$ & 
		$\begin{aligned}
			f_1(\mathbf{x}) &= x_1^2 + x_2^2 \\
			f_2(\mathbf{x}) &= (x_1 - 5)^2 + (x_2 - 5)^2 \\
			f_3(\mathbf{x}) &= \parallel \mathbf{x} \parallel_1
		\end{aligned}$ 
		& $\mathbf{-5} \leq \mathbf{x} \leq \mathbf{10}$ \\
		
		\\
		
		SP1 \& $\ell_1$ & 
		$\begin{aligned}
			f_1(\mathbf{x}) &= (x_1 - 1)^2 + (x_1 - x_2)^2 \\
			f_2(\mathbf{x}) &= (x_2 - 3)^2 + (x_1 - x_2)^2 \\
			f_3(\mathbf{x}) &= \parallel \mathbf{x} \parallel_1
		\end{aligned}$ 
		& $\mathbf{5} \leq \mathbf{x} \leq \mathbf{10}$ \\
		
		\bottomrule
		\label{test problem}
	\end{tabular}
\end{table}

For the large-scale bi-objective optimization problems with sparse structures, we selected three sparsity levels: 10\%, 20\%, and 50\%. For a given group of (m, n, Spar), the data in a large-scale problem is generated as follows:

$$
\begin{aligned}
	&\mathbf{A} = \mathbf{np.random.randn(m,n)}; &&\mathbf{s} = \mathbf{Spar * n};  \\		
	&\mathbf{x} = \mathbf{np.random.uniform(0,1,(200,1))}; && \mathbf{x[:n - int(s)] = 0}; \\	
	&\mathbf{np.random.shuffle(x)};&&\mathbf{x[x > 1] = 1}; \\	
	&\mathbf{bb} = \mathbf{A.dot(x)}; &&\mathbf{b} = \mathbf{np.maximum(bb, np.zeros(bb.shape))}.
\end{aligned}
$$

The parameter settings for the DNNM algorithm can be referenced below:
$$\sigma = 0.75,\alpha = 4,\mu_{0} = 0.5,\hat{\epsilon} = 0.001,iter_{max} = 1e3.$$
The parameter settings for the DNNM algorithm can be referenced below:
$$\varepsilon=1e-3,\delta=1e-3,c=0.25,t_0=1,iter_{max} = 1e3.$$

To demonstrate that using objective functions like JOS1 in three-objective test problems is reasonable, we compare them with the fast proximal gradient algorithm for multi-objective optimization \cite{zhang2023convergence}. For convenience, we refer to it as FPGA. This comparison shows that the SAPGM algorithm can degenerate into FPGA, thereby confirming that the composition of the three-dimensional test problems is appropriate. The results are listed in Table \ref{com purity} and Figure \ref{The Pareto fronts for Smooth problem}. It can be seen from the results that although the involvement of smoothing causes SAPGM to be slower than FPGA on smooth problems, both can obtain similar Pareto fronts. This indicates that SAPGM can degenerate into FPGA.

\begin{table}[ht]
	\centering
	\caption{Comparison between SAPGM and FPGA (Purity, Gamma, Delta, and HVS)}
	\begin{tabular}{lcccc|cccc} 
		\toprule
		Problem & \multicolumn{4}{c}{SAPGM} & \multicolumn{4}{c}{FPGA} \\
		\cmidrule(lr){2-5} \cmidrule(lr){6-9}
		& purity & $\Gamma$ & $\Delta$ & hvs
		& purity & $\Gamma$ & $\Delta$ & hvs  \\
		\midrule
		JOS1 & 0.9155 & 0.0787 & 0.8684 & 0.1163
		& 0.9155 & 0.0787 & 0.8684 & 0.1163 \\
		BK1  & \bf{0.9670} & 0.4703 & 0.9989 & \bf{0.0920}
		& 0.9520 & \bf{0.1259} & \bf{0.6884} & 0.0221 \\
		SP1  & \bf{0.9437} & \bf{0.1070} & \bf{0.6819} & \bf{0.0838}
		& 0.7370 & 0.3318 & 1.4252 & 0.0187 \\
		\bottomrule
		\label{com purity}
	\end{tabular}
\end{table}

\begin{figure}[htbp]
	\centering
	\subfloat{%
		\includegraphics[width=0.45\textwidth]{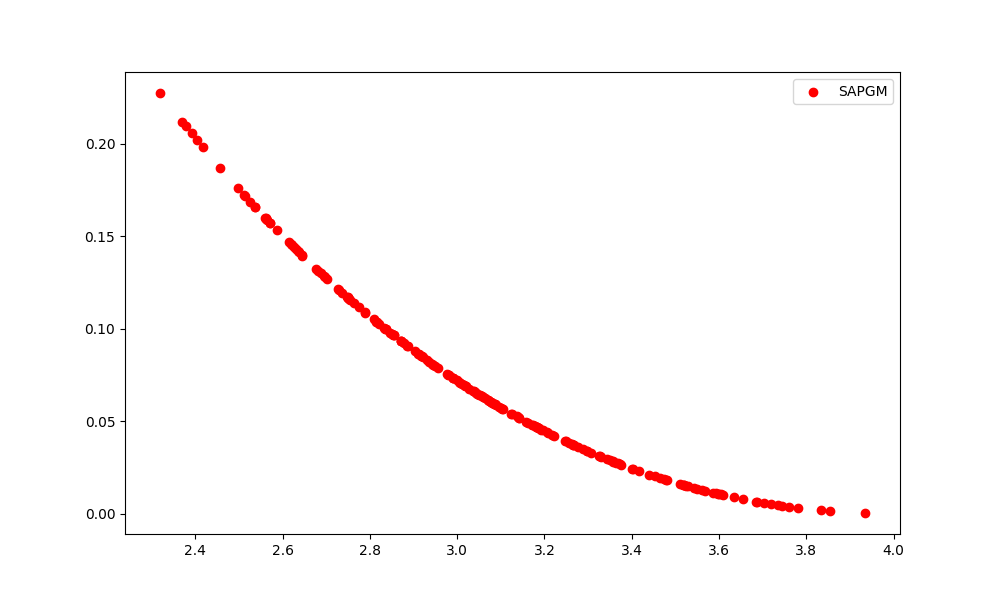}
		\label{JOS1_SAPGM}
	}
	\hfill
	\subfloat{%
		\includegraphics[width=0.45\textwidth]{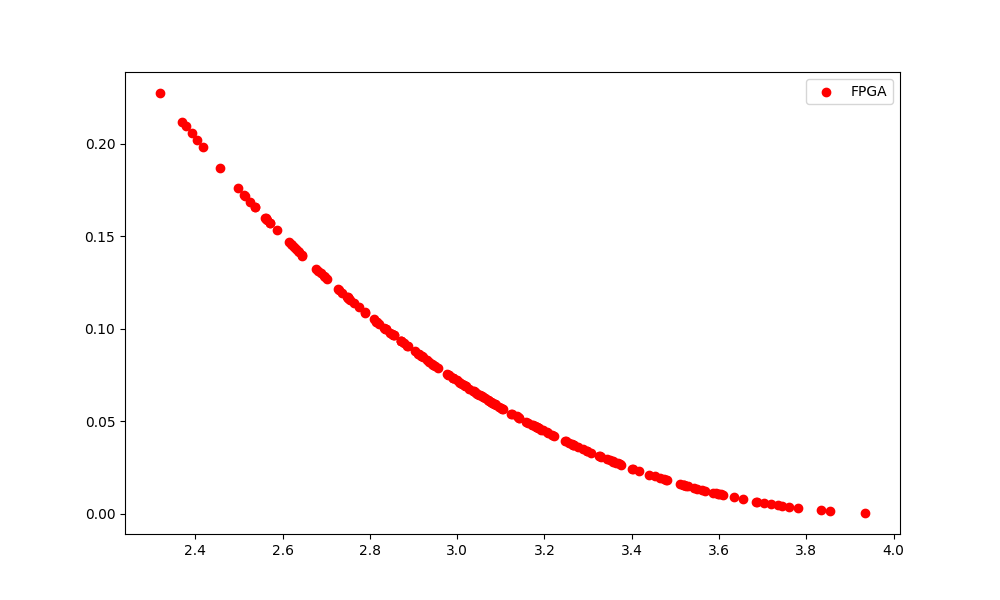}
		\label{JOS1_APG}
	}
	
\parbox{\textwidth}{\centering (a) JOS1}

	\subfloat{%
		\includegraphics[width=0.45\textwidth]{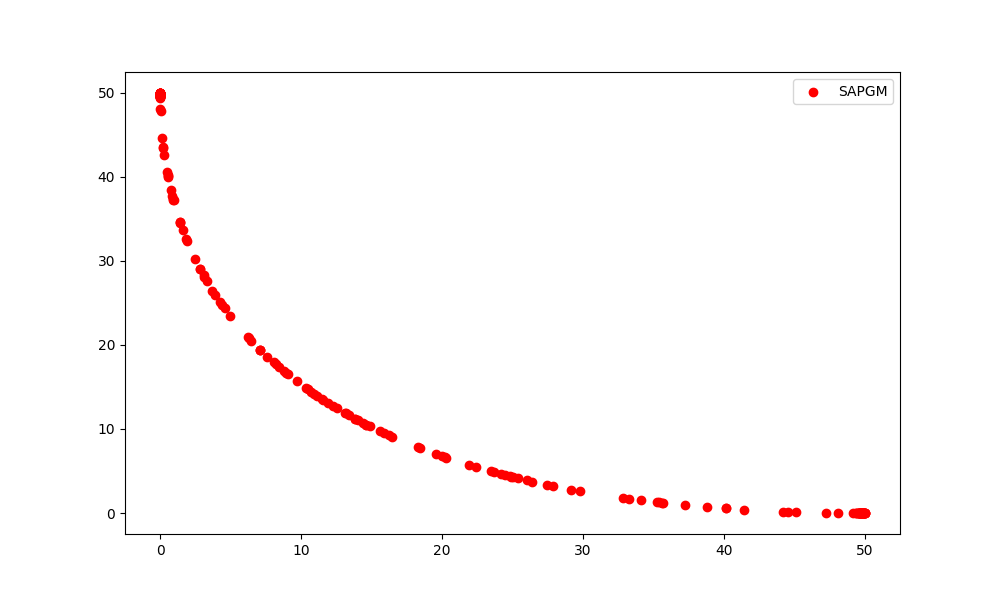}
		\label{BK1_SAPGM_2}
	}
	\hfill
	\subfloat{%
		\includegraphics[width=0.45\textwidth]{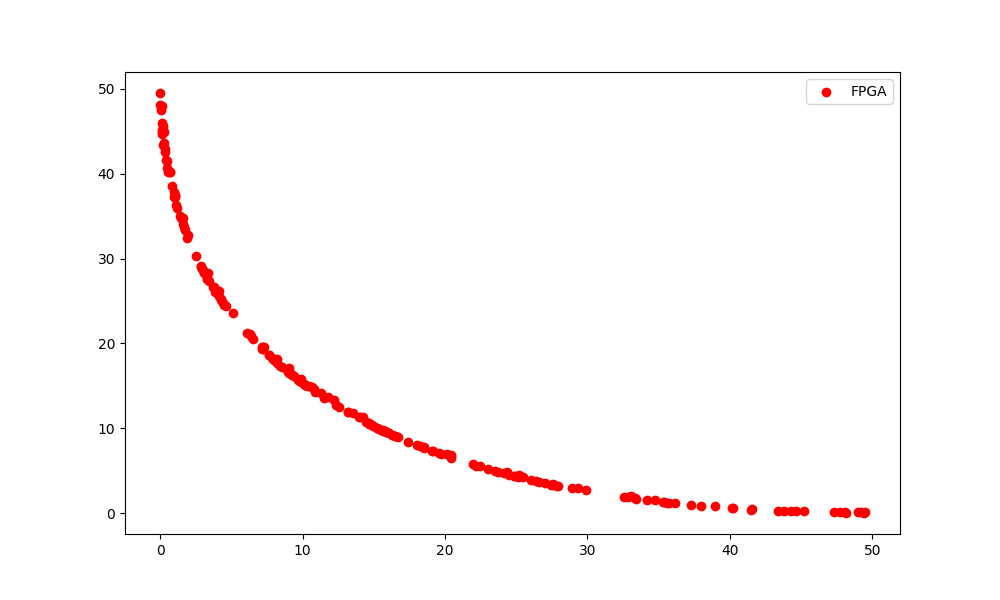}
		\label{BK1_APG}
	}

\parbox{\textwidth}{\centering (b) BK1}

	\subfloat{%
		\includegraphics[width=0.45\textwidth]{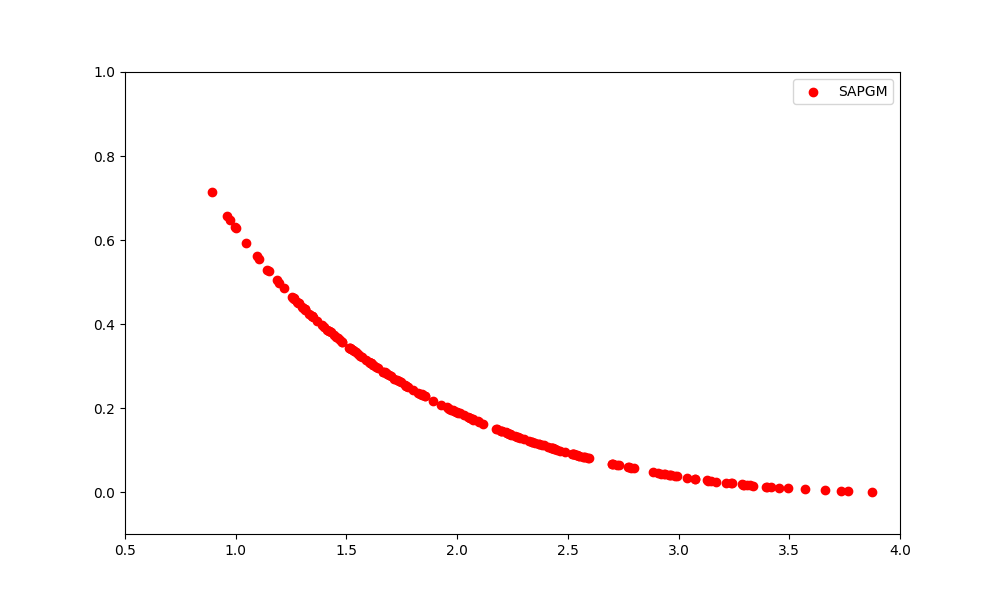}
		\label{SP1_SAPGM}
	}
	\hfill
	\subfloat{%
		\includegraphics[width=0.45\textwidth]{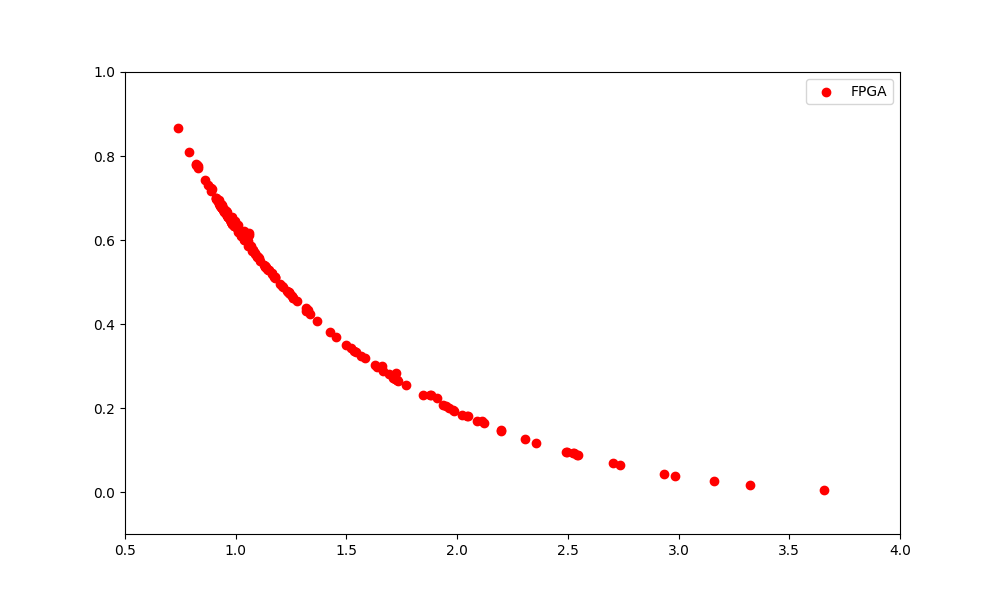}
		\label{SP1_APG}
	}
	
\parbox{\textwidth}{\centering (c) SP1}

	\caption{The Pareto fronts for Smooth problems.}
	\label{The Pareto fronts for Smooth problem}
\end{figure}

We use the following metrics to evaluate the performance of the algorithms:

\bf{Number of Iterations}: \normalfont The total number of iterations required to meet the stopping criteria.

\bf{Time}: \normalfont The time taken to satisfy the stopping criteria.

\bf{Purity} \cite{Bandtopadhyay2004multiobjective}: \normalfont This metric represents the proportion of solutions obtained by a given solver that lie within the approximated Pareto frontier.

\bf{Hypervolume}(hvs) \cite{Zitzler1999evolutionary}: \normalfont This metric quantifies the volume of the objective space dominated by the obtained Pareto frontier. 

\bf{Spread Metrics} ($\Gamma$ and $\Delta$) \cite{Custodio2011direct}: \normalfont These metrics assess the distribution of solutions across the Pareto frontier. 

Additionally, we constructed performance profiles \cite{dolan2002benchmarking} for each evaluation metric to facilitate a comprehensive comparison of the algorithms.

We now check the performance of the algorithms. For each problem above, we run the algorithms with 200 different initial points, in which Figure \ref{The Pareto fronts for large scale problems when Spar = 10} to Figure \ref{The Pareto fronts for large scale problems when Spar = 50} are the Pareto front of the large-scale double objective optimization problems, Figure \ref{The Pareto fronts for Tri-objective optimization problems} is the front of the three-objective optimization problems, and Figure \ref{The Pareto fronts for small scale two objective optimization problems} is the front of the small-scale double objective optimization problems. In general, SAPGM can map the problem ground surface well, while the DNNM algorithm can not accurately reflect the problem ground surface in some problems. Table \ref{t1} shows the average of the computational time and iteration counts for each problem. From the table, it is possible to see that acceleration is in general more efficient in terms of time. In fact, by checking the performance profiles given in Figure \ref{Performance Profile}(a) and Figure \ref{Performance Profile}(b), we observe that SAPGM performs better in terms of iteration counts and time.

Besides the performance, it is usually important to see how good the Pareto frontier is. Thus, once again we
show performance profiles, spread metric $\Gamma$ (Figure \ref{Performance Profile}(c)), spread metric $\Delta$ (Figure \ref{Performance Profile}(d)) hypervolume (Figure \ref{Performance Profile}(e)) and this time for purity (Figure \ref{Performance Profile}(f)). SAPGM outperforms the DNNM, obtaining better Pareto frontiers. We can thus conclude that at least among the test problems considered, SAPGM seems promising both in terms of performance and uniform Pareto frontiers.

In cases where the SAPGM algorithm exhibits the same number of iterations across different problems as shown in the table, we discovered that the number of iterations is related to the parameter constraints of $\mu$. As the constraints are reduced, the number of iterations changes, but this does not significantly affect the characterization of the Pareto front. Three kinds of problems, the CR\&MF2, JOS1\&$\ell_1$ and large scale problem ((m,n)=(500,100),spar=10\%), are selected as test problems to explore the influence of different $\mu$ on algorithm iteration times and Pareto frontier characterization. The results are listed in Table \ref{t11} to Table \ref{t13}.In Table \ref{t13}, hypervolume is zero due to the low sparsity of the initial point. It does not significantly affect the results. We find that with the decrease of $\mu$, the number of iterations and running time increase. However, judging from the performance profiles used before, the decrease of $\mu$ does not strengthen the characterization of the Pareto frontier but achieves slightly worse results in some problems. So we confirm that $\mu$'s choice of 1e-3 is a reasonable choice.

\begin{table}[htbp]
	\centering
	\caption{Performance of SAPGM and DNNM}
	\begin{tabular}{llrrrr}
		\toprule
		\multirow{2}{*}{Class} & \multirow{2}{*}{Problem} & \multicolumn{2}{c}{SAPGM} & \multicolumn{2}{c}{DNNM} \\
		\cmidrule(r){3-4} \cmidrule(l){5-6}
		&  & iter & time & iter & time \\
		\midrule
		\multirow{1}{*}{Two obj}
		& CR\&MF2 & \textbf{43600} & \textbf{97.6322} & 200000 & 244.7978 \\
		& CB3\&LQ & \textbf{43600} & \textbf{128.4447} & 223294 & 471.4855 \\
		& CB3\&MF1 & \textbf{43600} & \textbf{96.1398} & 150972 & 348.6537 \\
		\midrule
		\multirow{9}{*}{Large scale}
		& \textbf{Spar = 0.1} & & & & \\
		& 500*100 & \textbf{2200} & \textbf{27.4354} & 95498 & 10941.3457 \\
		& 1000*200 & \textbf{2200} & \textbf{50.6058} & 31635 & 2558.4360 \\
		& 2000*400 & \textbf{2200} & \textbf{208.9523} & 199316 & 82865.6740 \\
		& \textbf{Spar = 0.2} & & & & \\
		& 500*100 & \textbf{2200} & \textbf{32.7338} & 165255 & 11572.2238 \\
		& 1000*200 & \textbf{2200} & \textbf{65.8336} & 173010 & 21074.3461 \\
		& 2000*400 & \textbf{2200} & \textbf{322.2478} & 198621 & 41724.1614 \\
		& \textbf{Spar = 0.5} & & & & \\
		& 500*100 & \textbf{2200} & \textbf{28.2064} & 194611 & 12932.9436 \\
		& 1000*200 & \textbf{2200} & \textbf{80.7623} & 113755 & 14413.0340 \\
		& 2000*400 & \textbf{2200} & \textbf{171.9536} & 200000 & 48799.6603 \\
		\midrule
		\multirow{3}{*}{Three obj}
		& JOS1\&$\ell_1$ & \textbf{43600} & \textbf{129.6857} & 48344 & 132.5368 \\
		& BK1\&$\ell_1$ & \textbf{43600} & \textbf{346.8673} & 761652 & 1311.6791 \\
		& SP1\&$\ell_1$ & \textbf{43600} & \textbf{345.6614} & 196950 & 405.0506 \\
		\bottomrule
	\end{tabular}
	\label{t1}
\end{table}

\begin{table}[H]
	\centering
	\caption{Results for different values of $\mu$ in CR \& MF2.}
	\begin{tabular}{lcccccc}
		\toprule
		Metric & \multicolumn{6}{c}{CR \& MF2 }\\
		\cmidrule(r){2-7}
		&$\mu$=1e-1 & $\mu$=1e-2 & $\mu$=1e-3 & $\mu$=1e-5 & $\mu$=1e-7 & $\mu$=1e-10 \\
		\midrule
		Iter   & 800      & 6200     & 43600    & 200000   & 200000   & 200000 \\
		Time   & 1.1963   & 8.1796   & 93.1940  & 629.3381 & 633.3625 & 2046.4206 \\
		Purity & 0        & 0.0693   & 0.8713   & 0.8713   & 0.8713   & 0.8713 \\
		$\Gamma$ & /      & /      & 6.9795   & 6.9795   & 6.9795   & 6.9795 \\
		$\Delta$  & /      & /      & 0.8068   & 0.8068   & 0.8068   & 0.8068 \\
		hvs    & 0        & 0        & 128.0904 & 128.0904 & 128.0904 & 128.0904 \\
		\bottomrule
	\end{tabular}
	
	\label{t11}
\end{table}

\begin{table}[H]
	\centering	
	\caption{Results for different values of $\mu$ in JOS1 \& $\ell_1$.}
	\begin{tabular}{lcccccc}
		\toprule
		Metric & \multicolumn{6}{c}{JOS1 \& $\ell_1$} \\
		\cmidrule(r){2-7}
		& $\mu$=1e-1 & $\mu$=1e-2 & $\mu$=1e-3 & $\mu$=1e-5 & $\mu$=1e-7 & $\mu$=1e-10 \\
		\midrule
		Iter   & 800      & 6200     & 43600    & 200000   & 200000   & 200000 \\
		Time   & 1.9047   & 17.9726  & 129.6857 & 1644.7979 & 1645.9337 & 2146.4206 \\
		Purity & 0        & 0        & 0.9559   & 0.9559   & 0.9559   & 0.9559 \\
		$\Gamma$ & /    & /      & 0.1591   & 0.1591   & 0.1591   & 0.1591 \\
		$\Delta$ & /    & /      & 0.8635   & 0.8635   & 0.8635   & 0.8635 \\
		hvs    & 0        & 0        & 0.5866   & 0.5866   & 0.5866   & 0.5866 \\
		\bottomrule
	\end{tabular}
	
	\label{t12}
\end{table}

\begin{table}[H]
	\centering
	\caption{Results for different values of $\mu$ in the Large scale problem .}
	\begin{tabular}{lcccccc}
		\toprule
		Metric & \multicolumn{6}{c}{Large scale problem when (m,n,Spar) = (500,100,10\%)} \\
		\cmidrule(r){2-7}
		& $\mu$=1e-1 & $\mu$=1e-2 & $\mu$=1e-3 & $\mu$=1e-5 & $\mu$=1e-7 & $\mu$=1e-10 \\
		\midrule
		Iter   & 800      & 2200     & 2200    & 2200   & 2200   & 2200 \\
		Time   & 14.7241  & 50.6055  & 27.4354 & 36.1575 & 44.6184 & 40.6530 \\
		Purity & 1.0000   & 1.0000   & 0.9600  & 0.9570  & 0.9570  & 0.9570 \\
		$\Gamma$ & 0.3364 & 0.3256   & 0.3256  & 0.3256  & 0.3256  & 0.3256 \\
		$\Delta$ & 1.9225 & 2.3421   & 2.3420  & 2.3421  & 2.3421  & 2.3421 \\
		hvs    & 0.0000  & 0.0000   & 0.0000  & 0.0000  & 0.0000  & 0.0000 \\
		\bottomrule
	\end{tabular}
	
	\label{t13}
\end{table}

\begin{figure}[H]
	\centering 	
	\subfloat{%
		\includegraphics[width=0.45\textwidth]{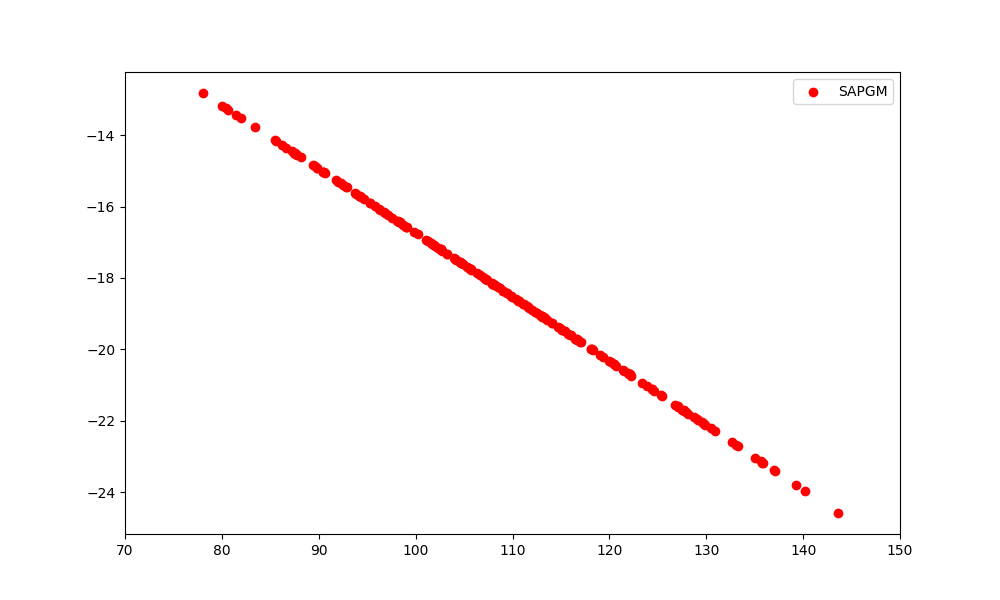}
		\label{CRSAPGM}
	}
	\hfill
	\subfloat{%
		\includegraphics[width=0.45\textwidth]{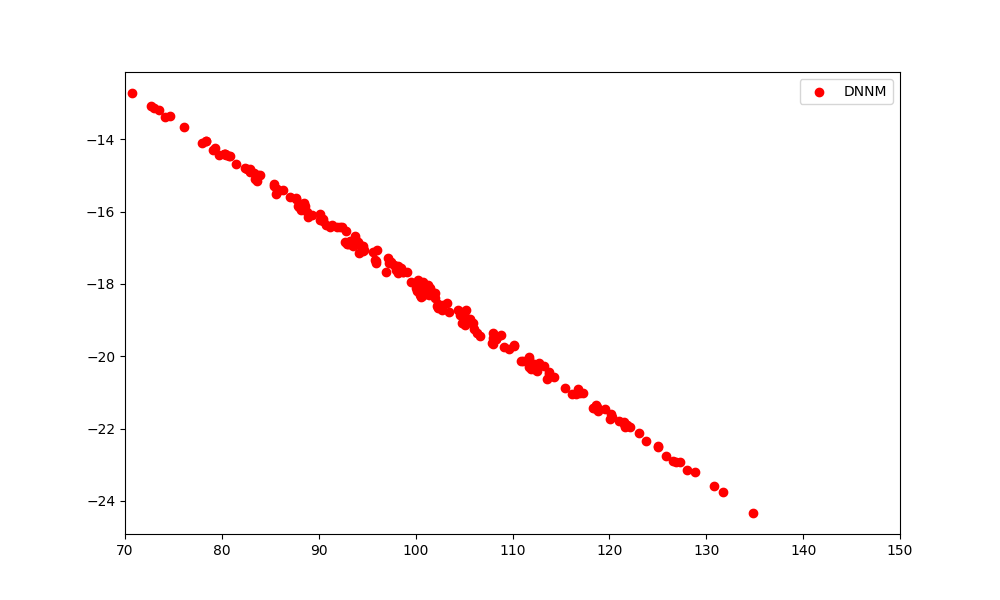}
		\label{CRDNNM}
	}

\parbox{\textwidth}{\centering (a) CR\&MF2}
	
	\subfloat{%
		\includegraphics[width=0.45\textwidth]{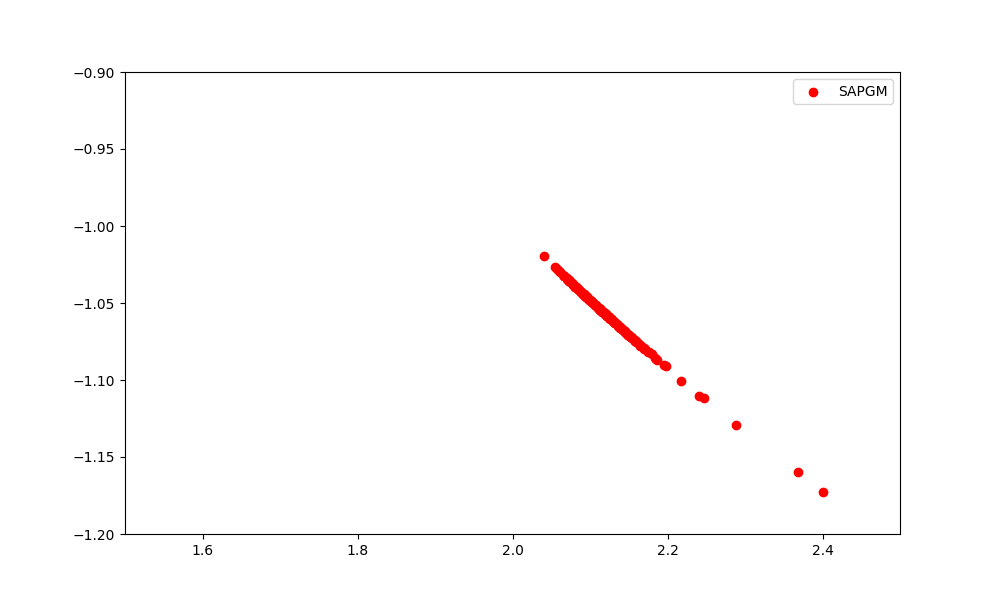}
		\label{CB3LQSAPGM}
	}
	\hfill
	\subfloat{%
		\includegraphics[width=0.45\textwidth]{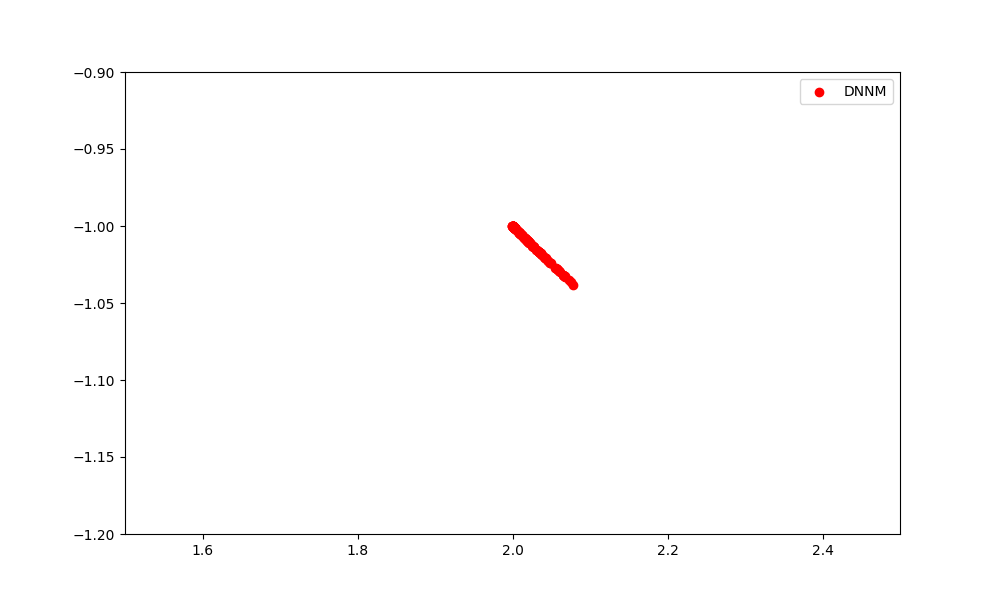}
		\label{CB3LQDNNM}
	}

	\parbox{\textwidth}{\centering (b) CB3\&LQ}
	
	\subfloat{%
		\includegraphics[width=0.45\textwidth]{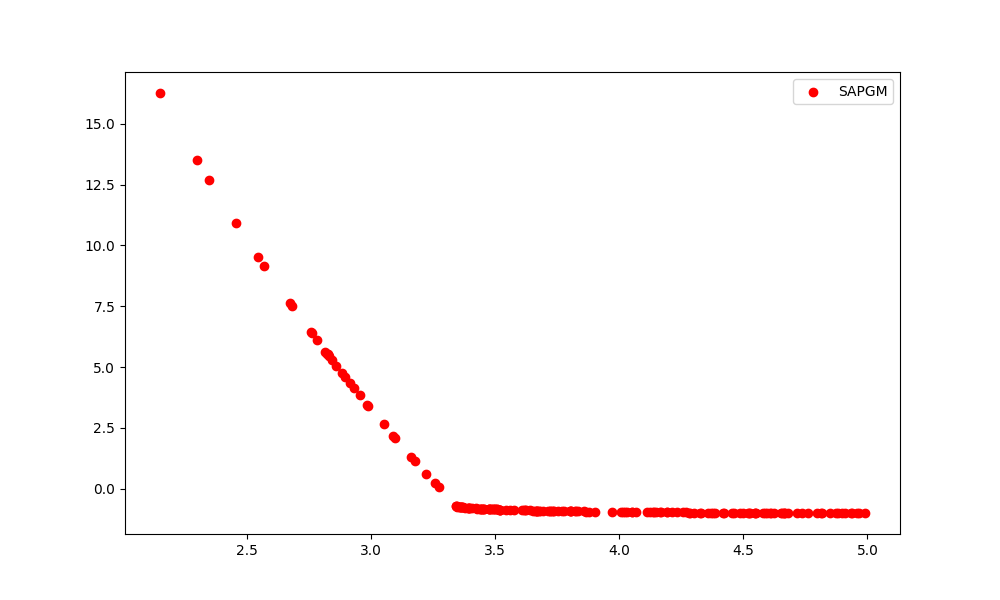}
		\label{CB3MF1SAPGM}
	}
	\hfill
	\subfloat{%
		\includegraphics[width=0.45\textwidth]{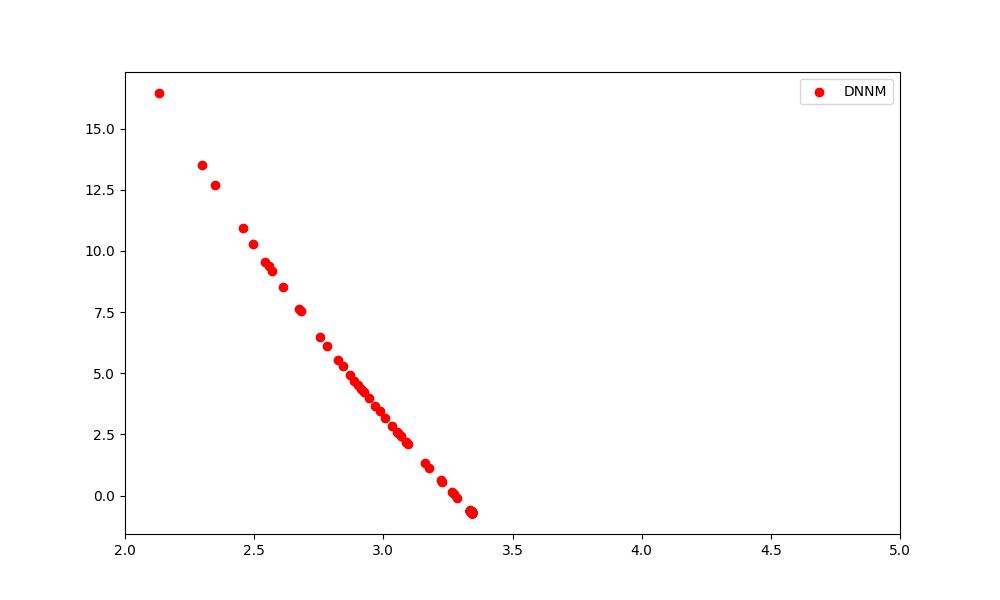}
		\label{CB3MF1DNNM}
	}

\parbox{\textwidth}{\centering (c) CB3\&MF1}
	\caption{The Pareto fronts for small scale two objective optimization problems.}
	\label{The Pareto fronts for small scale two objective optimization problems}
\end{figure}

\begin{figure}[H]
	\centering
	\subfloat{%
		\includegraphics[width=0.45\textwidth]{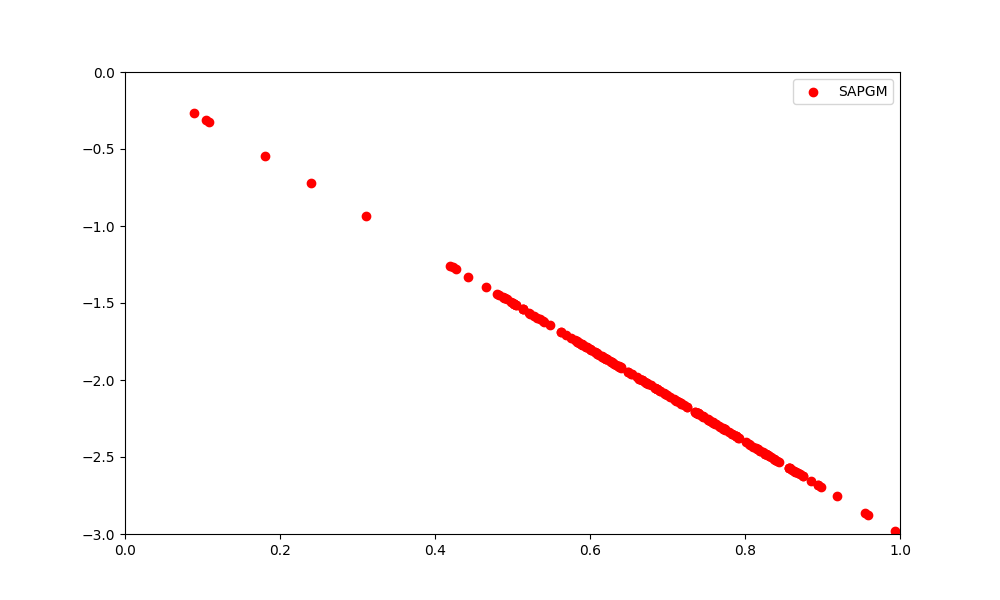}
		\label{500_100_SAPGM}
	}
	\hfill
	\subfloat{%
		\includegraphics[width=0.45\textwidth]{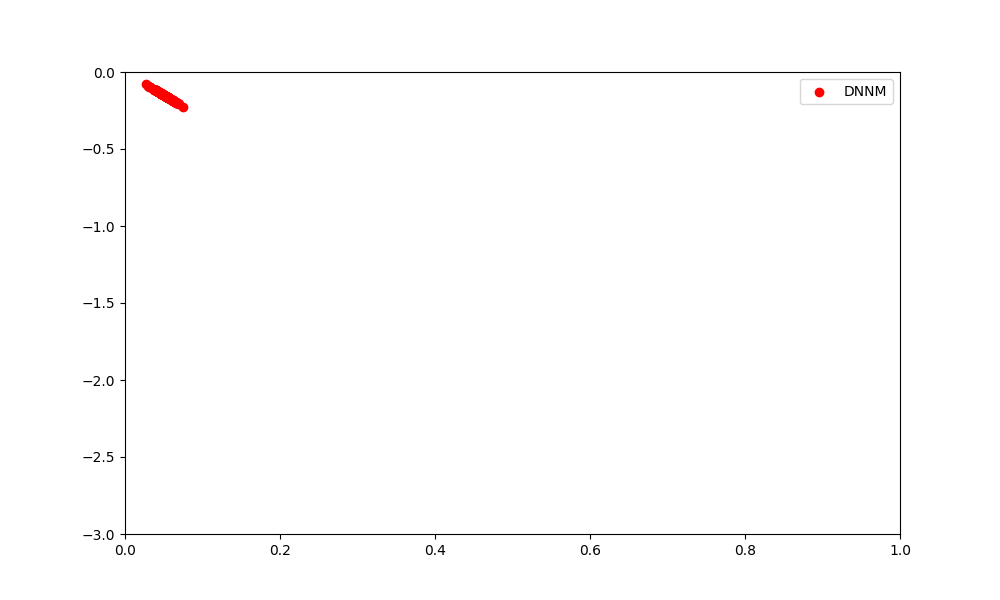}
		\label{500_100_DNNM}
	}
	
\parbox{\textwidth}{\centering (a) spar = 10\%,(m,n) = (500,100)}

	\subfloat{%
		\includegraphics[width=0.45\textwidth]{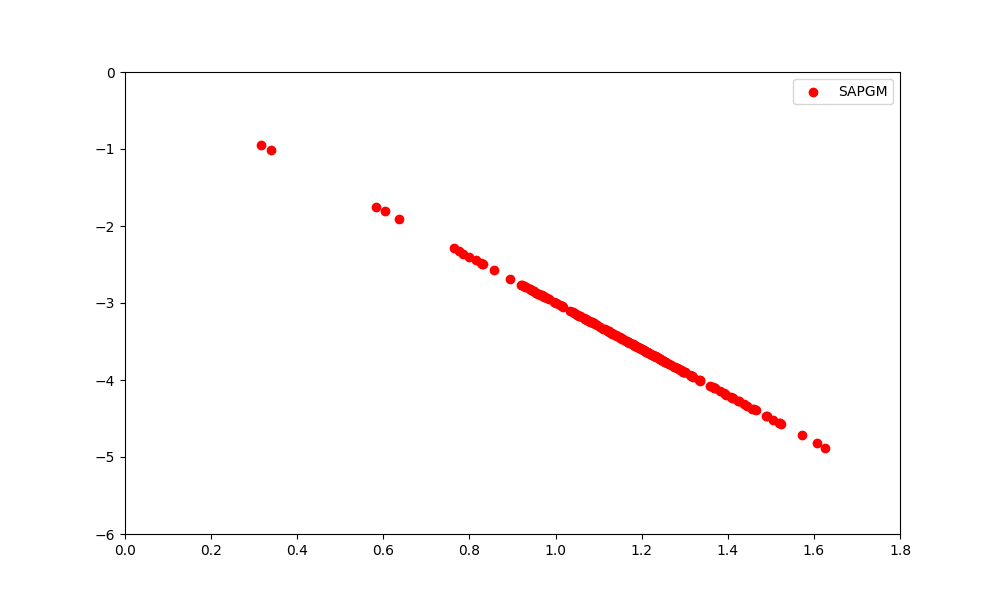}
		\label{1000_200_SAPGM}
	}
	\hfill
	\subfloat{%
		\includegraphics[width=0.45\textwidth]{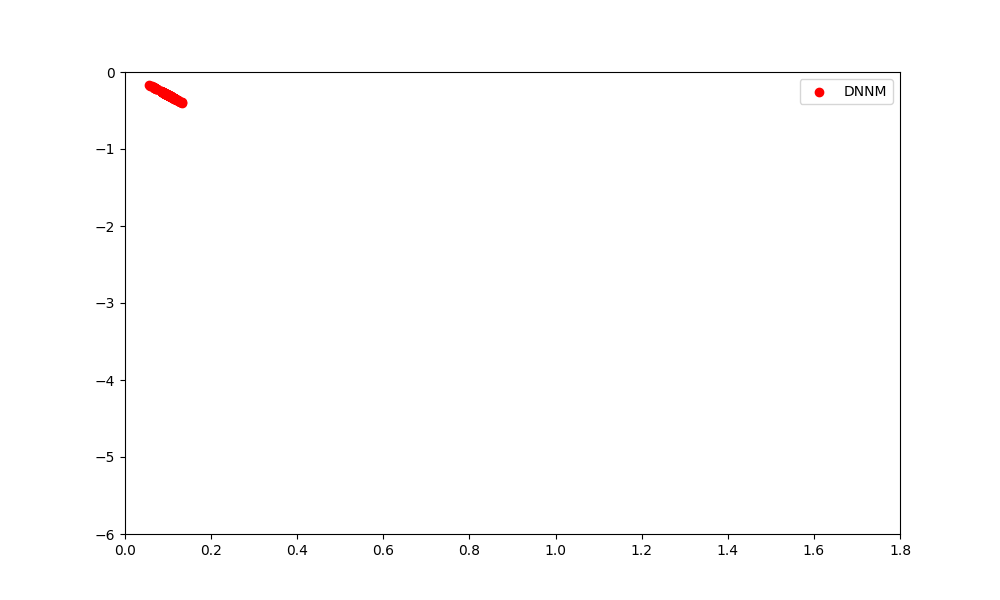}
		\label{1000_200_DNNM}
	}

\parbox{\textwidth}{\centering (b) spar = 10\%,(m,n) = (1000,200)}
	
	\subfloat{%
		\includegraphics[width=0.45\textwidth]{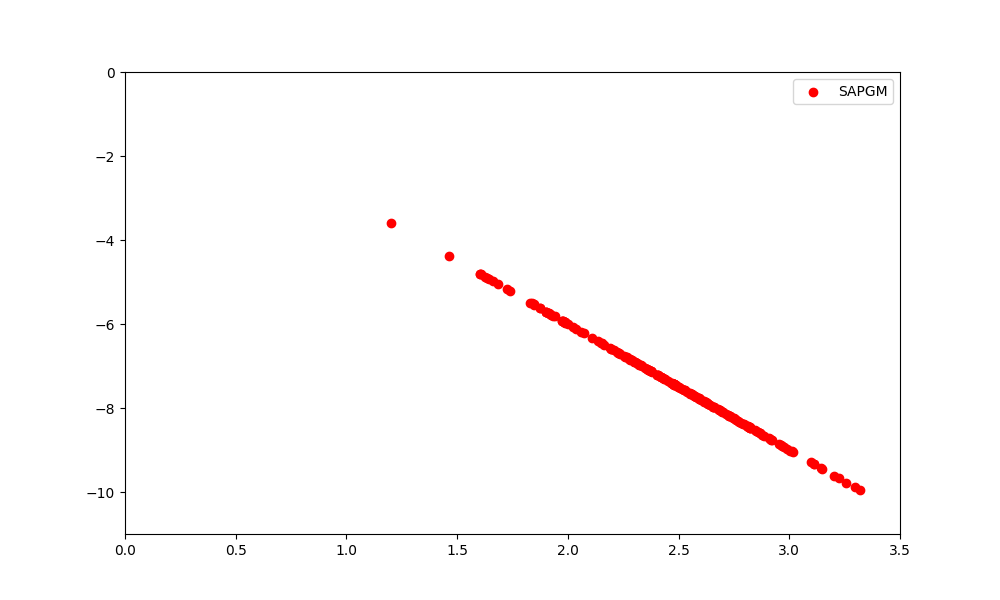}
		\label{2000_400_SAPGM}
	}
	\hfill
	\subfloat{%
		\includegraphics[width=0.45\textwidth]{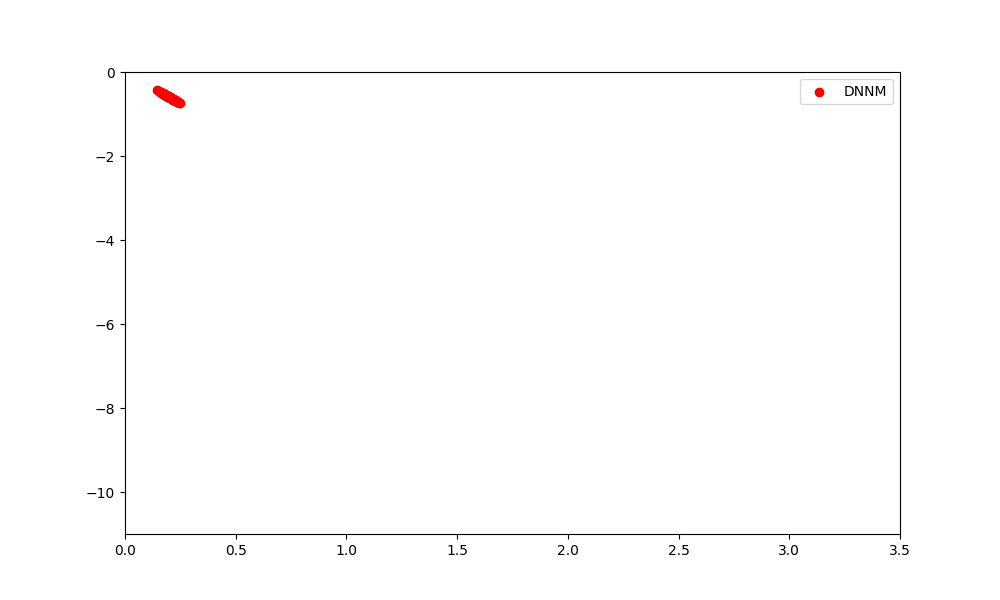}
		\label{2000_400_DNNM}
	}
	
\parbox{\textwidth}{\centering (c) spar = 10\%,(m,n) = (2000,400)}

	\caption{The Pareto fronts for large scale problems when Spar = 10\%.}
	\label{The Pareto fronts for large scale problems when Spar = 10}
\end{figure}

\begin{figure}[H]
	\centering
	\subfloat{%
		\includegraphics[width=0.45\textwidth]{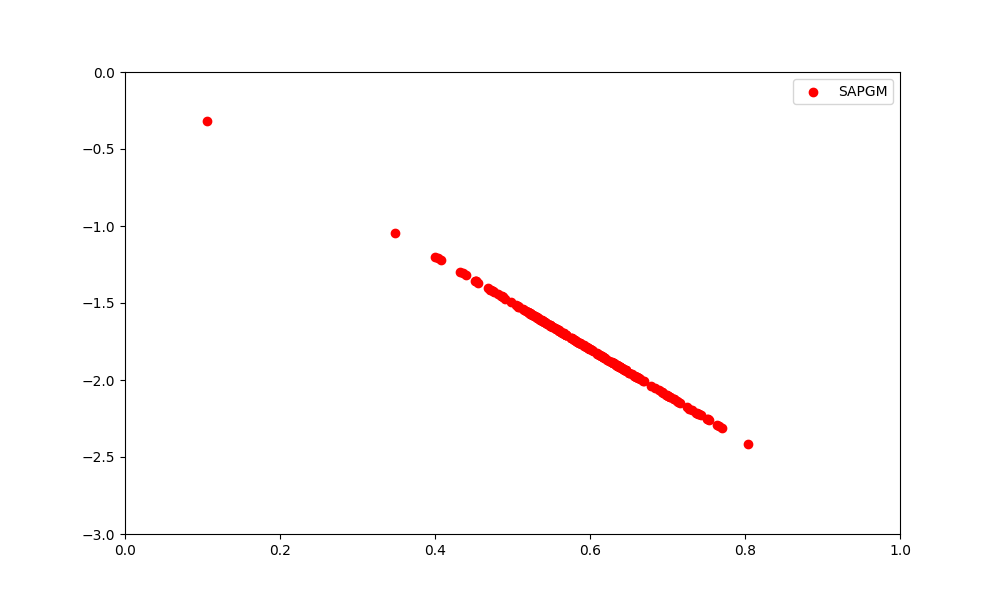}
		\label{500_100_0.2_SAPGM}
	}
	\hfill
	\subfloat{%
		\includegraphics[width=0.45\textwidth]{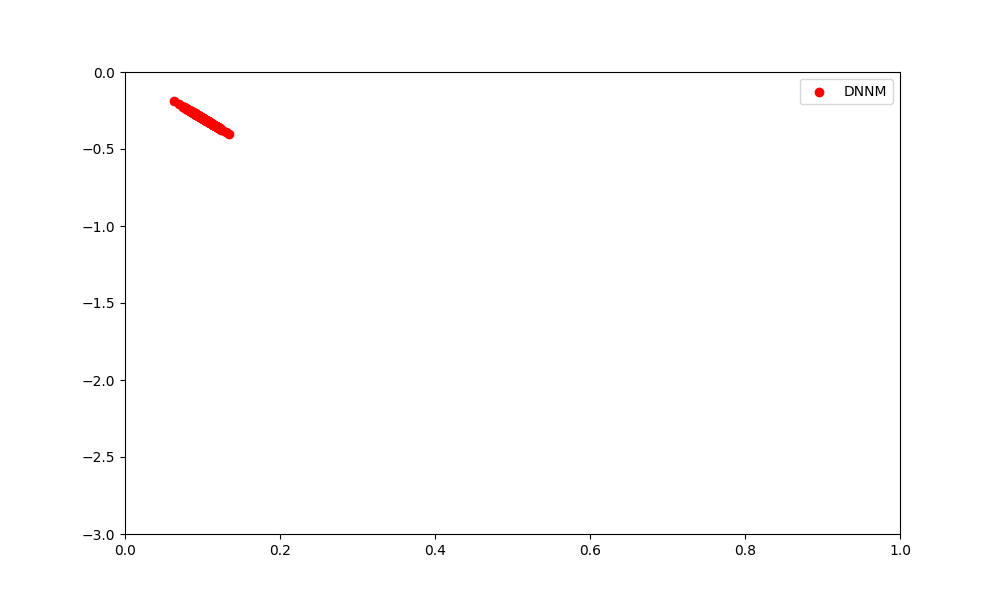}
		\label{500_100_0.2_DNNM}
	}
	
\parbox{\textwidth}{\centering (a) spar = 20\%,(m,n) = (500,100)}

	\subfloat{%
		\includegraphics[width=0.45\textwidth]{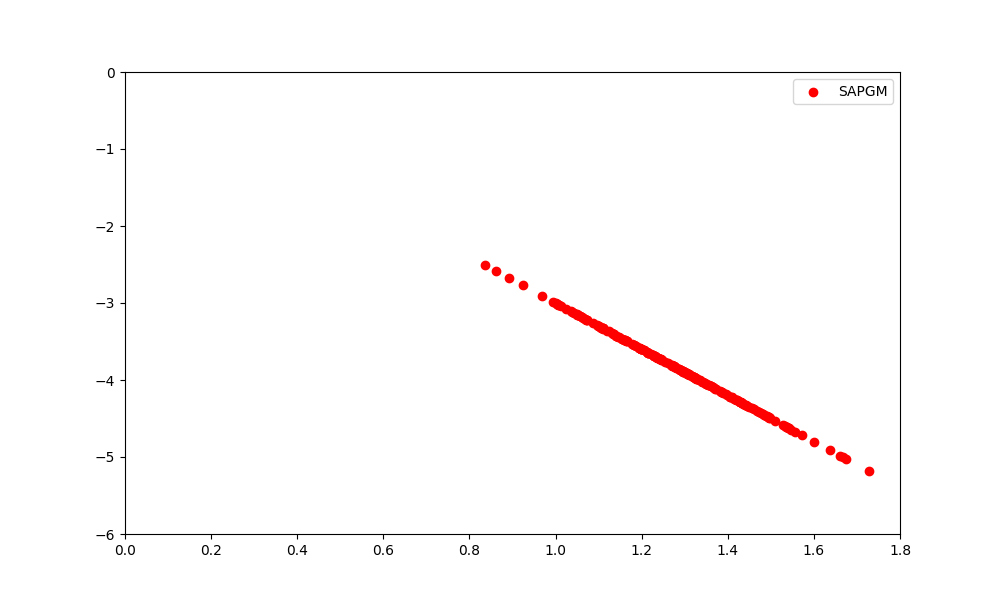}
		\label{1000_200_0.2_SAPGM}
	}
	\hfill
	\subfloat{%
		\includegraphics[width=0.45\textwidth]{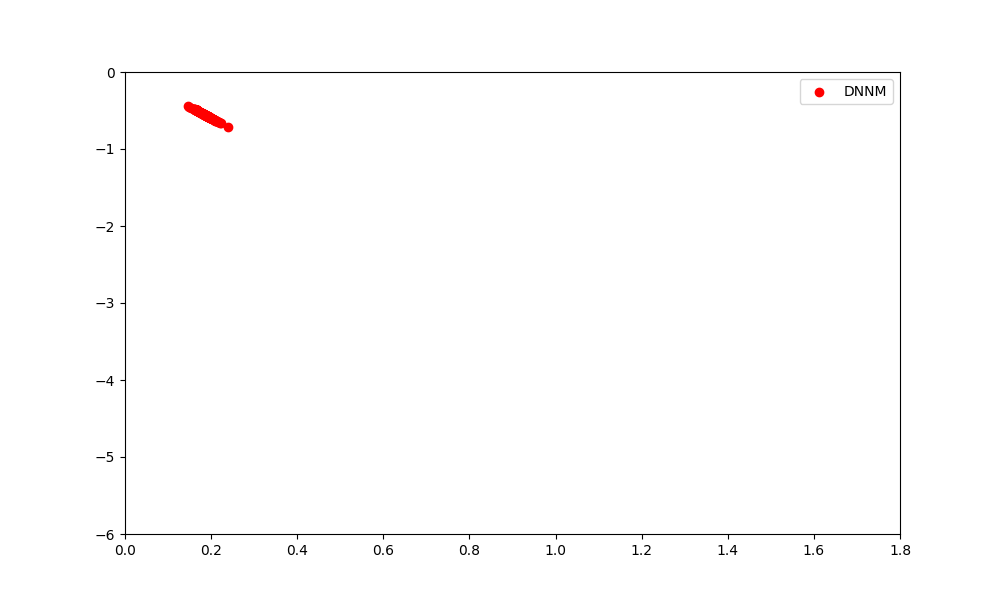}
		\label{1000_200_0.2_DNNM}
	}
	
\parbox{\textwidth}{\centering (b) spar = 20\%,(m,n) = (1000,200)}

	\subfloat{%
		\includegraphics[width=0.45\textwidth]{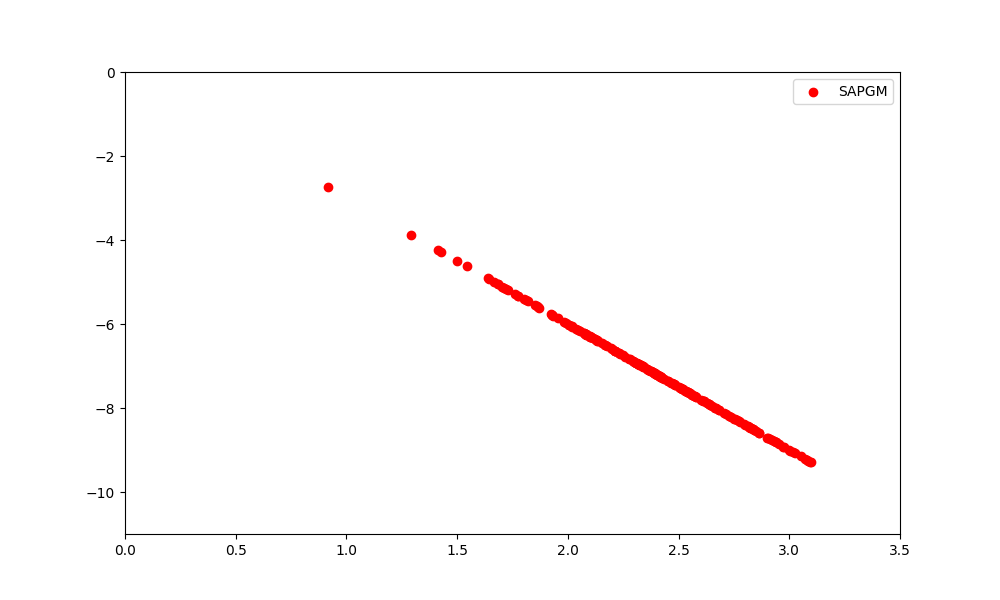}
		\label{2000_400_0.2_SAPGM}
	}
	\hfill
	\subfloat{%
		\includegraphics[width=0.45\textwidth]{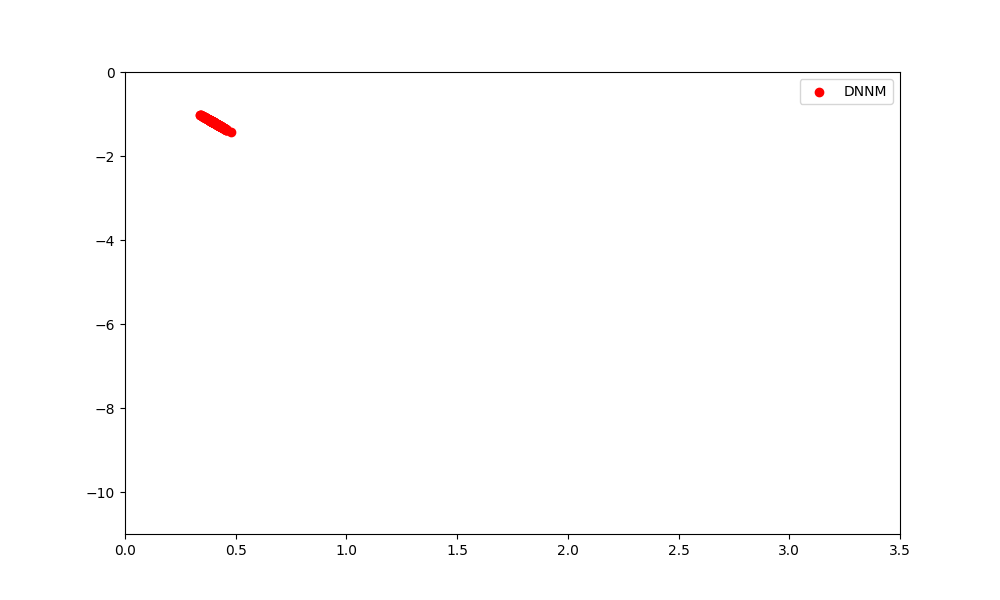}
		\label{2000_400_0.2_DNNM}
	}

\parbox{\textwidth}{\centering (c) spar = 20\%,(m,n) = (2000,400)}

	\caption{The Pareto fronts for large scale problems when Spar = 20\%.}
	\label{The Pareto fronts for large scale problems when Spar = 20}
\end{figure}

\begin{figure}[H]
	\centering
	\subfloat{%
		\includegraphics[width=0.45\textwidth]{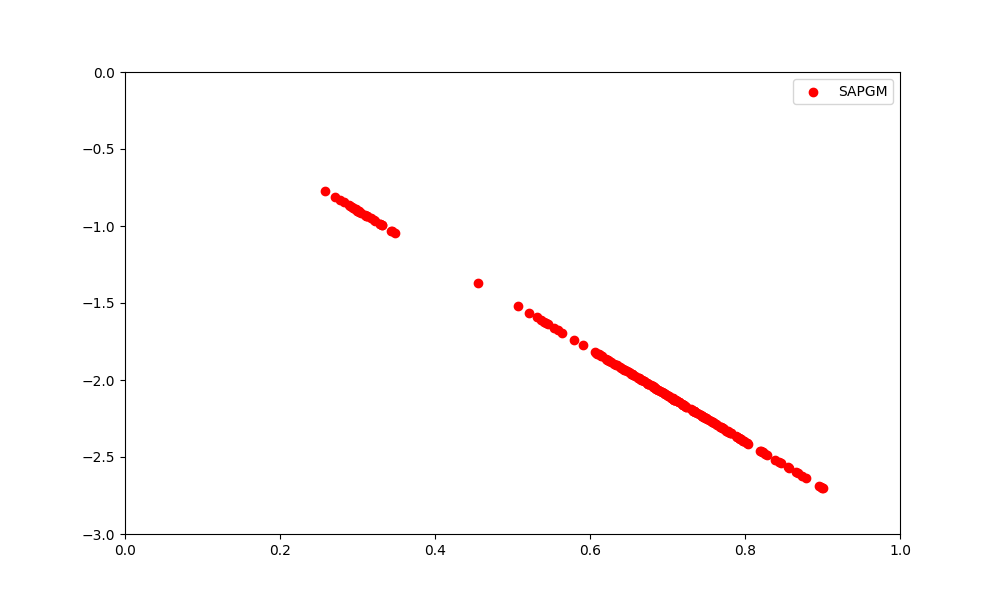}
		\label{500_100_0.5_SAPGM}
	}
	\hfill
	\subfloat{%
		\includegraphics[width=0.45\textwidth]{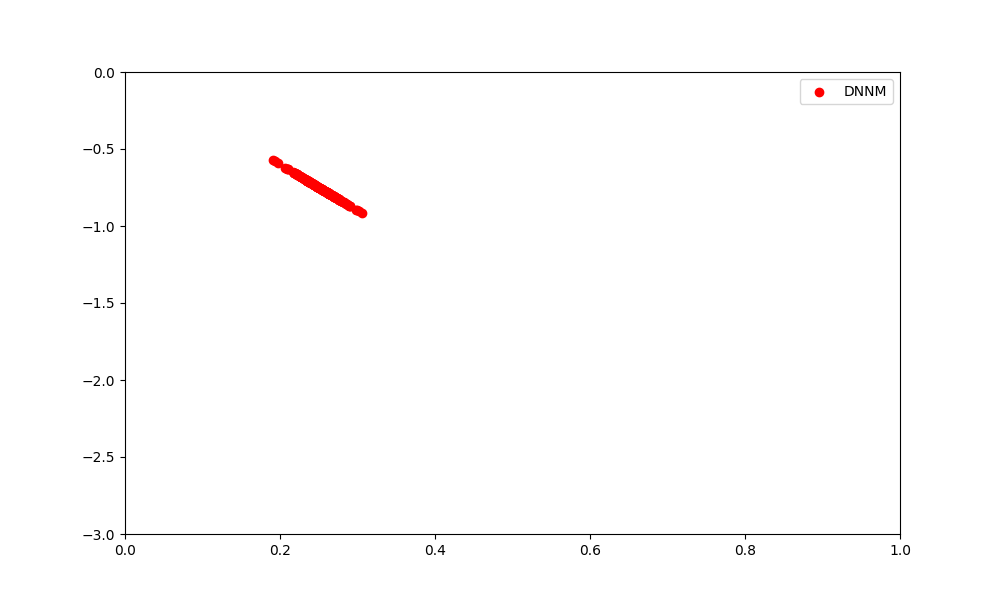}
		\label{500_100_0.5_DNNM}
	}

\parbox{\textwidth}{\centering (a) spar = 50\%,(m,n) = (500,100)}

	\subfloat{%
		\includegraphics[width=0.45\textwidth]{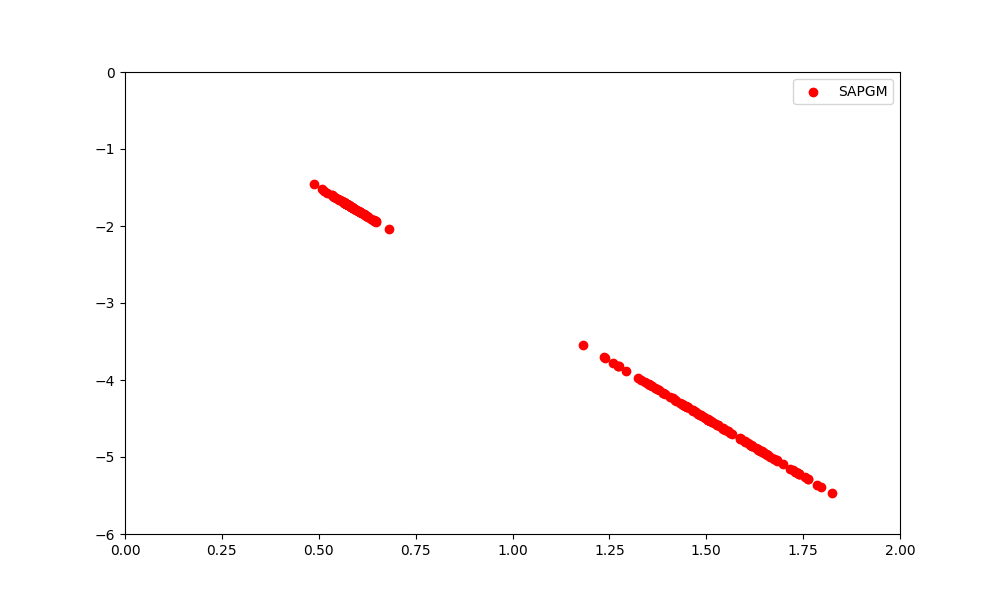}
		\label{1000_200_0.5_SAPGM}
	}
	\hfill
	\subfloat{%
		\includegraphics[width=0.45\textwidth]{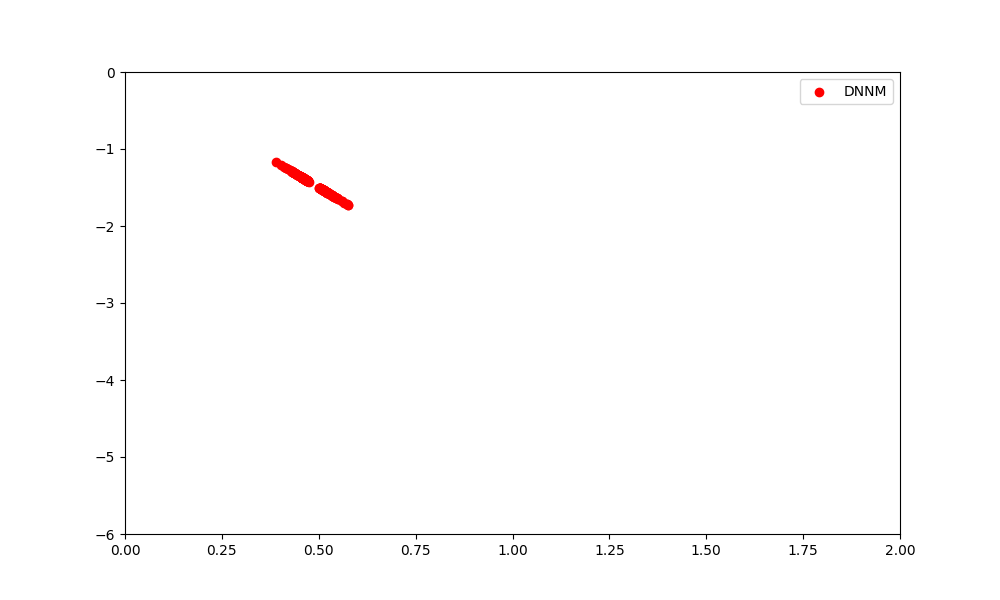}
		\label{1000_200_0.5_DNNM}
	}
	
\parbox{\textwidth}{\centering (b) spar = 50\%,(m,n) = (1000,200)}

	\subfloat{%
		\includegraphics[width=0.45\textwidth]{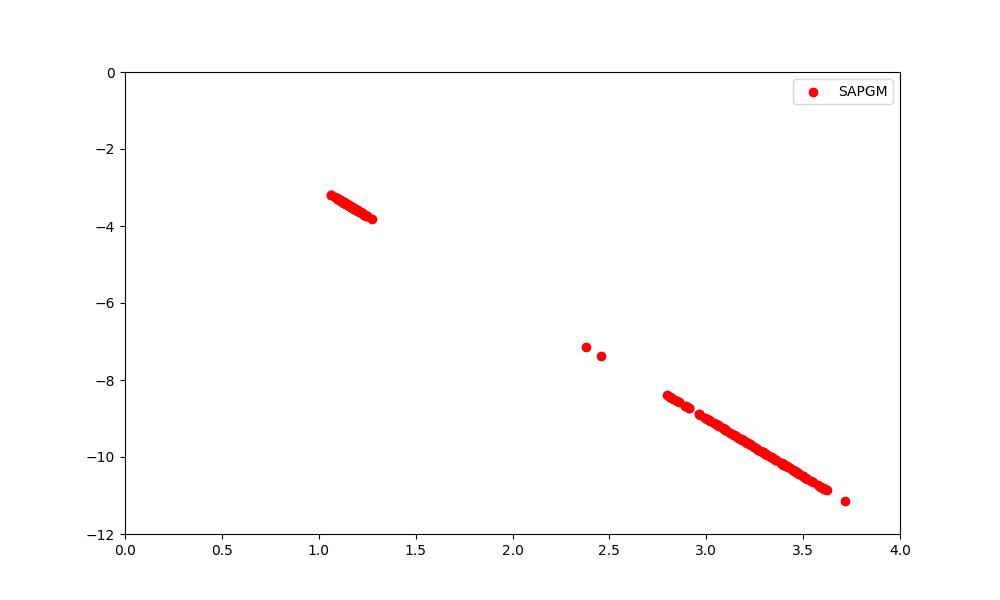}
		\label{2000_400_0.5_SAPGM}
	}
	\hfill
	\subfloat{%
		\includegraphics[width=0.45\textwidth]{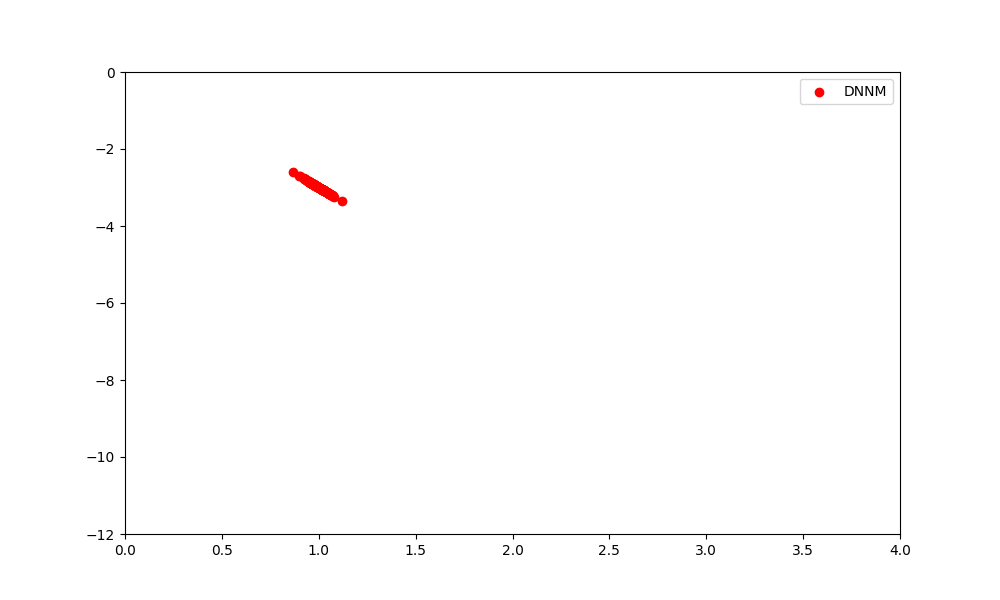}
		\label{2000_400_0.5_DNNM}
	}

\parbox{\textwidth}{\centering (c) spar = 50\%,(m,n) = (2000,400)}

	\caption{The Pareto fronts for large scale problems when Spar = 50\%.}
	\label{The Pareto fronts for large scale problems when Spar = 50}
\end{figure}

\begin{figure}[H]
	\centering
	\subfloat{%
		\includegraphics[width=0.45\textwidth]{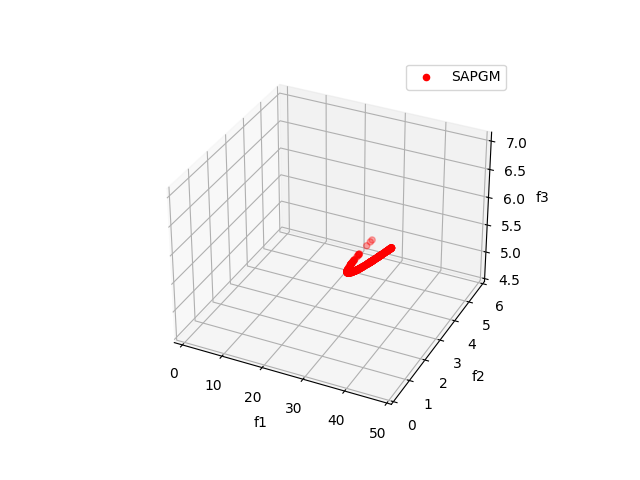}
		\label{BK1SAPGM}
	}
	\hfill
	\subfloat{%
		\includegraphics[width=0.45\textwidth]{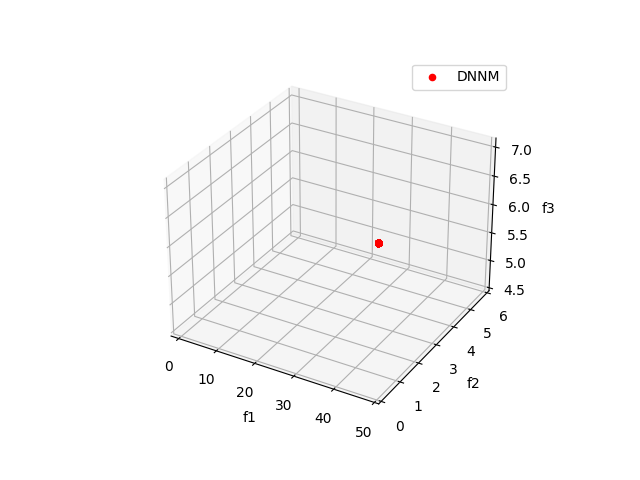}
		\label{BK1DNNM}
	}

\parbox{\textwidth}{\centering (a) BK1\&$\ell_1$}
	
	\subfloat{%
		\includegraphics[width=0.45\textwidth]{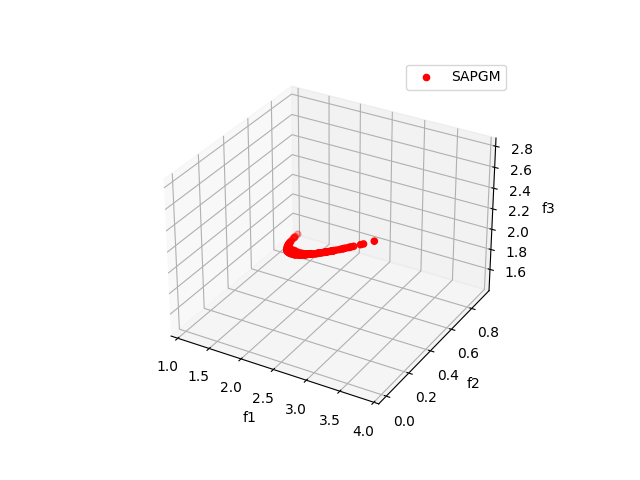}
		\label{jos1SAPGM}
	}
	\hfill
	\subfloat{%
		\includegraphics[width=0.45\textwidth]{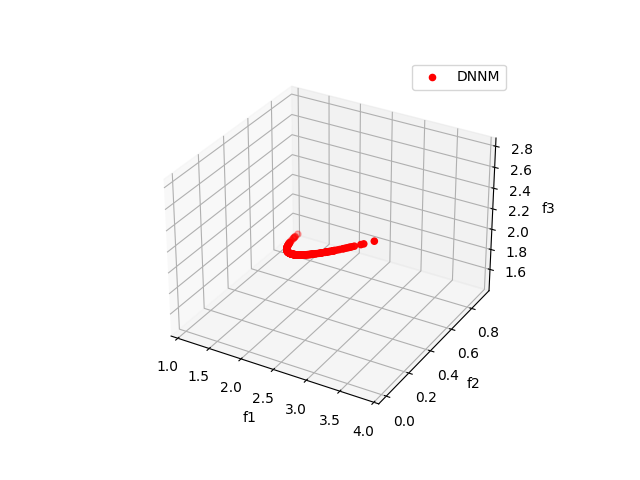}
		\label{fig:jos1DNNM}
	}
	
\parbox{\textwidth}{\centering (b) JOS1\&$\ell_1$}

	\subfloat{%
		\includegraphics[width=0.45\textwidth]{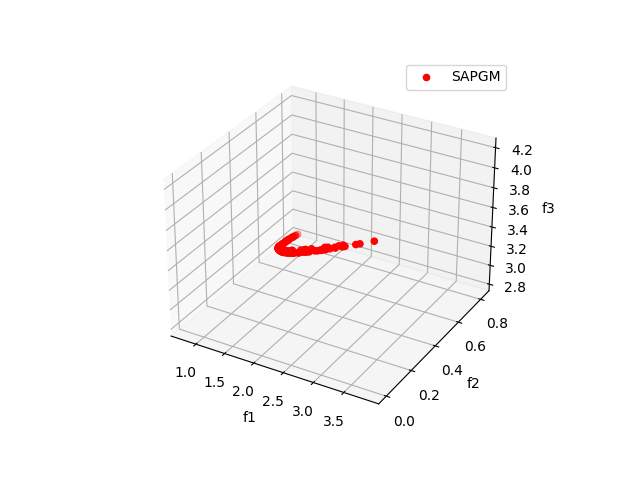}
		\label{SP1SAPGM}
	}
	\hfill
	\subfloat{%
		\includegraphics[width=0.45\textwidth]{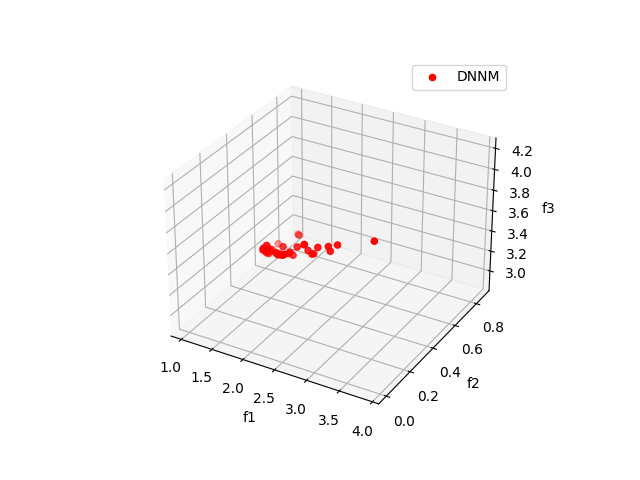}
		\label{SP1DNNM}
	}

\parbox{\textwidth}{\centering (c) SP1\&$\ell_1$}

	\caption{The Pareto fronts for Tri-objective optimization problems.}
	\label{The Pareto fronts for Tri-objective optimization problems}
\end{figure}

\begin{figure}[H]
	\centering
	\subfloat[Performance Profile:iteration]{%
		\includegraphics[width=0.45\textwidth]{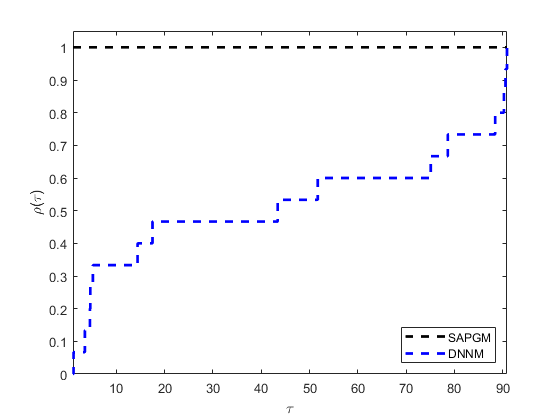}
		\label{iteration}
	}
	\hfill
	\subfloat[Performance Profile:time]{%
		\includegraphics[width=0.45\textwidth]{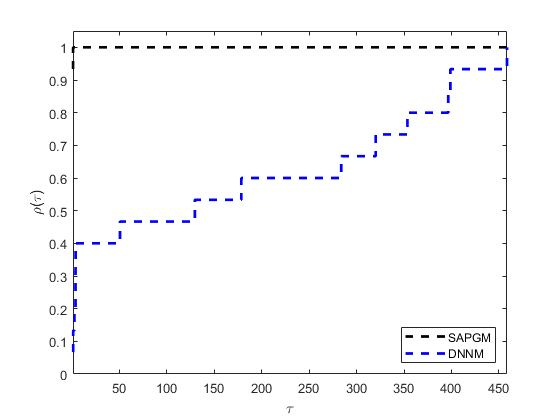}
		\label{time}
	}
	
	\subfloat[Performance Profile:Spread metric $\Gamma$]{%
		\includegraphics[width=0.45\textwidth]{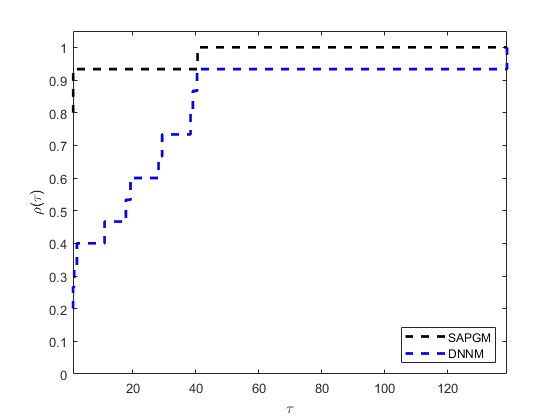}
		\label{gamma}
	}
	\hfill
	\subfloat[Performance Profile:Spread metric $\Delta$]{%
		\includegraphics[width=0.45\textwidth]{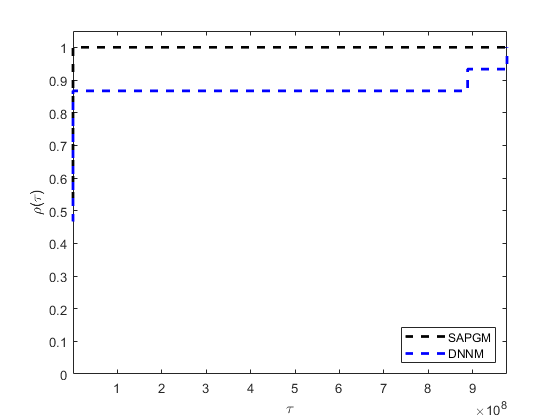}
		\label{delta}
	}
	
	\subfloat[Performance Profile:Hypervolume]{%
		\includegraphics[width=0.45\textwidth]{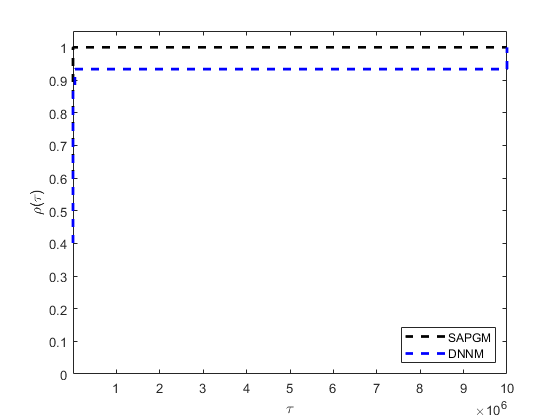}
		\label{hvs}
	}
	\hfill
	\subfloat[Performance Profile:Purity]{%
		\includegraphics[width=0.45\textwidth]{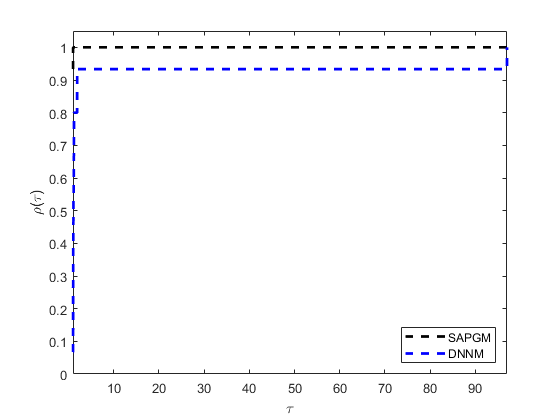}
		\label{purity}
	}
	\caption{Performance Profile.}
	\label{Performance Profile}
\end{figure}

\section{Conclusions}

In this paper, we propose a Smoothing Accelerated Proximal Gradient (SAPG) algorithm designed for the resolution of nonsmooth convex multiobjective optimization problems. Each iteration involves employing the accelerated proximal gradient with an extrapolation coefficient of $\frac{k-1}{k+\alpha-1}$ to minimize the problem (\ref{1}) with a fixed smoothing parameter, followed by an update to the smoothing parameter. Besides, we prove its convergence rate by a global energy function, which improves to $o(\ln^\sigma k/k)$. Additionally, theoretical proofs affirm that the iterates sequence converges to an optimal solution to the problem. An effective strategy for solving the subproblem is presented through its dual representation. The results of numerical experiments underscore the superior performance of the SAPG algorithm and underscore the importance of extrapolation in achieving faster convergence rates.

For future work, we plan to discuss the influence of parameters $\ell,1/\mu$, and $\alpha$ on the convergence speed of the algorithm, and give more general parameter selection criteria. This will be more conducive to the application of the algorithm to specific problems and enhance the specific application value of the algorithm. In addition, we will also study whether SAPGM has a good effect on problems with higher dimensions and more objective functions, and we hope to replace $\ell_1$ norm with $\ell_0$ norm in the target problem, which is conducive to the application of the algorithm in large-scale sparse optimization problems. But it also means that we need more theories to support our research on these goals.






\section*{Acknowledgments}
We would like to thank you for \textbf{following
the instructions above} very closely. It will save us lot of time and expedite the
process of your article's publication.









\medskip
Received xxxx 20xx; revised xxxx 20xx; early access xxxx 20xx.
\medskip

\end{document}